\documentclass[reqno]{amsart}

\usepackage{tikz}
\usepackage{placeins}

\usepackage{graphicx} 

\usepackage{amssymb,amsmath,amsfonts,amsthm}

\usepackage{hyperref}

\usepackage[backend=bibtex, 
style=numeric,
sortcites=true,
giveninits=true,
hyperref,
maxbibnames=99,
doi=false,isbn=false,url=false,eprint=false,
]{biblatex}

\bibliography{biblio}{}

\newtheorem{theorem}{Theorem}[section]
\newtheorem{lemma}[theorem]{Lemma}
\newtheorem{proposition}[theorem]{Proposition}
\newtheorem{corollary}[theorem]{Corollary}
\newtheorem{remark}[theorem]{Remark} 
\newtheorem{example}[theorem]{Example}

\newtheorem{definition}[theorem]{Definition}

\newcommand{\R}{\mathbb{R}}

\newcommand{\C}{\mathbb{C}}
\renewcommand{\Re}{\textrm{Re}}

\newcommand{\area}{\textrm{Area}}
\newcommand{\vol}{\textrm{Vol}}
\newcommand{\capa}{\mathrm{cap}}
\newcommand{\Reff}{R_\textrm{eff}}
\newcommand{\supp}{\textrm{supp}}

\usepackage[svgnames]{xcolor}

\title{A generalized Cheeger inequality and the Steklov Problem on finite graphs}

\author[Hua]{Bobo Hua}
\address{School of Mathematical Sciences, LMNS, Fudan University, Shanghai, China}
\email{bobohua@fudan.edu.cn}

\author[Kamtue]{Supanat Kamtue}
\address{Department of Mathematics
and Computer Science, Faculty of Science, Chulalongkorn University, Bangkok, Thailand}
\email{supanat.k@chula.ac.th}

\author[Liu]{Shiping Liu}
\address{School of Mathematical Sciences, University of Science and Technology of China, Hefei, China}
\email{spliu@ustc.edu.cn}

\author[M\"unch]{Florentin M\"unch}
\address{Institute of Mathematics, Leipzig University, Leipzig, Germany}

\email{cfmuench@gmail.com}
\author[Peyerimhoff]{Norbert Peyerimhoff}
\address{Department of Mathematical Sciences, Durham University, Durham, UK}
\email{norbert.peyerimhoff@durham.ac.uk}

\date{\today}

\begin{document}

\begin{abstract} 

{We  prove generalized Cheeger inequalities for eigenvalues of Laplacians for reversible Markov chains. Then we apply Hassannezhad and Miclo's convergence result to obtain Jammes Cheeger inequalities for Steklov eigenvalues. In particular, we get a sharp estimate for the first non-trivial Steklov eigenvalue via Escobar Cheeger constant. At the end, we extend Hassannezhad and Miclo's convergence result to non-reversible Markov chains via a different method based on resolvent convergence, answering one of their questions.}
\end{abstract}

\maketitle

\tableofcontents

\section{Introduction}
Cheeger \cite{cheeger70}  established an eigenvalue estimate for the Laplacian on Riemannian manifolds in terms of the isoperimetric constant, now commonly referred to as the Cheeger constant. For a closed connected Riemannian manifold $M,$ 
$$\lambda_2(M)\geq \frac14 h_M^2,$$ where $\lambda_2(M)$ is the second eigenvalue of the Laplacian on $M$ (the first eigenvalue is zero) and $h_M$ is the Cheeger constant defined as
$$h_M:=\inf_{\substack{
A\subset M\\ \vol(A)\leq \frac12\vol(M)}}\frac{\area(\partial A)}{\vol(A)},$$
where $A$ runs through all open subsets of $M$ with smooth boundaries. Note that the Cheeger constant is an eigenvalue of the 1-Laplacian operator. The Cheeger estimate plays an important role in spectral geometry in the literature; see, e.g., \cite{buser80,Chavel84,ledoux1994simple,Kawohl03}.

For a compact connected Riemannian manifold with smooth boundary $M,$ the Dirichlet-to-Neumann operator is defined as $$\Lambda_M: H^{\frac12}(\partial M)\to H^{-\frac12}(\partial M),f \mapsto \frac{\partial u_f}{\partial n},$$ where $u_f$ is the harmonic extension of $f$ to $M$ and $n$ is the outward normal vector on $\partial M.$ Note that $\Lambda_M$ is a non-local pseudo-differential operator, whose eigenvalues are called Steklov eigenvalues, which were  introduced by Steklov in 1902; see, e.g., \cite{Steklov1}.
Escobar \cite{escobar97} introduced the so-called Escobar Cheeger constant
\begin{align}\label{eq:escobar}
    h_E(M) &=\inf_{\substack{
    A\subset M\\ \area(A\cap \partial M)\leq \frac12 \area(\partial M)}}\frac{\area(\partial A\cap \mathrm{int}(M))}{\area(A\cap \partial M)},
\end{align}
and proved a Cheeger type estimate of the second Steklov
eigenvalue: for any $a,k>0,$
\begin{align}\label{eq:EscobarEstimate}
    \sigma_2(M) \ge \frac{(h_E(M)\mu_1(k) -ak )a}{a^2+\mu_1(k)},
\end{align} where $\sigma_2(M)$ is the second Steklov eigenvalue of $M$ (the first eigenvalue is zero) and $\mu_1(k)$ is the  Laplacian eigenvalue with the Robin boundary condition $\frac{\partial u}{\partial n}+ku =0\ \text{on } \partial M.$ Jammes \cite{jammes15} introduced the so-called Jammes Cheeger constant
\begin{align}\label{eq:Jammes}
    h_J(M) =\inf_{\substack{
    A\subset M\\ \vol(A)\leq \frac12 \vol(M)}}\frac{\area(\partial A\cap \mathrm{int}(M))}{\area(A\cap \partial M)},
\end{align} and proved another estimate of the second Steklov
eigenvalue:
\begin{align}\label{eq:JammesEstimate}
    \sigma_2(M) \ge \frac{1}{4} h(M)h_J(M),
\end{align} where $h(M)$ is the Cheeger constant of the Laplacian with Neumann boundary condition. See  the survey articles \cite{GP2017,CGGS2024} on these developments.


The analysis of graphs has attracted considerable attention in recent years. Our particular focus are eigenvalue problems on graphs. The Cheeger estimate was extended to graphs by Dodziuk \cite{Dodziuk1984} and Alon and Milman \cite{AM1985}, independently. There has been extensive research on Cheeger estimates for graphs; see, e.g., \cite{Mohar1989,Mohar1991,Chung1997,Chung2010,KellerPeyerimhoff11,LGT2014,BKW2015}. In this paper, we will prove generalized Cheeger inequalities for eigenvalues of Laplacians for reversible Markov chains; see Theorem~\ref{t:gcheeger} below.


The Steklov eigenvalues on graphs with boundary were introduced in \cite{CGB18,HHW2017,hassannezhad-miclo20}.  A discrete analogue of the Escobar Cheeger estimate was established in \cite{HHW2017}, and a discrete analogue of the Jammes Cheeger estimate was proven in \cite{HHW2017,hassannezhad-miclo20}.
Moreover, Hassannezhad and Miclo \cite{hassannezhad-miclo20} proved higher order Cheeger estimates for the Steklov eigenvalues.
See \cite{Perrin2019,Perrin2021,HeHua21,HH2022,HHW2022,Tschanz22,ShiYu22,ShiYu2022,YuYu2022,HH2023,Tschanz23,YuYu24,LinZhao2024,LinZhao25,ShiYu2025} for many other developments on discrete Steklov eigenvalues.


Hassannezhad and Miclo \cite{hassannezhad-miclo20} established an interesting relation between the Laplacian and Steklov eigenvalues: as the vertex measure tends to zero, the bounded Laplacian eigenvalues converge to the Steklov eigenvalues, thereby providing a bridge for studying Steklov eigenvalues via their Laplacian counterparts. By combining this convergence result with our Cheeger estimate for the Laplacian, we obtain new Jammes Cheeger inequalities for Steklov eigenvalues; see Theorem~\ref{t:Jammes} below. Furthermore, in the final section of the paper, we establish a convergence result for Steklov eigenvalues in the setting of non-reversible Markov chains, Theorem~\ref{thm:convlapsteklov}, thereby resolving an open problem posed in \cite{hassannezhad-miclo20}.

\subsection{Setup and notations}
Before we present the results of this paper, let us  introduce the relevant notation. Henceforth, $[n]$ denotes the set $\{1,2,\dots,n\}$ for any positive integer $n \in \mathbb{N}$.
Let $G=(V,p)$ be a continuous time Markov chain with a finite set $V$ of  states with $|V| \ge 2$, transition rates given by $p: V \times V \to [0,\infty)$ 
and $\mu: V \to (0,\infty)$ be an invariant probability measure satisfying
$$ \sum_{x \in V} p_{xy} \mu(x) = \mu(y) \sum_{x \in V} p_{yx},\quad \forall y\in V. $$
In contrast to discrete time Markov chains, we do not have any restrictions on the transition rates except for $p_{xx} = 0$ for all $x \in V$.
We usually write $(V,p,\mu)$ as a continuous time Markov chain with an invariant probability measure $\mu.$
A finite Markov chain gives rise to the following Laplacian on the space $C(V,\C) = \{ f: V \to \C \}$:
$$ \Delta f(x) = \sum_{y \in V} p_{xy}(f(x)-f(y)). $$
An enumeration of the states in $V$ yields an identification of 
the Laplacian $\Delta$ with a $n \times n$ matrix, where $n = |V|$, and the invariant probability measure $\mu$, as a column vector with positive entries, lies in the kernel of $\Delta^\top$. In other words, a probability measure $\mu$ is an invariant measure if and only if $\sum_{x \in V} \Delta f(x) \mu(x) = 0$ for all $f \in C(V,\C)$. Moreover, the transition rates induce the following oriented edge set $E^{or}(G)$: We have $(x,y) \in E^{or}(G)$ if and only if $p_{xy} > 0$.
Let $\langle \cdot,\cdot \rangle: C(V,\mathbb{C}) \times C(V,\mathbb{C}) \to \mathbb{C}$ be a Hermitian inner product, given by
$$ \langle f_1,f_2 \rangle_\mu = \langle f_1,f_2 \rangle = \sum_{x \in V} \mu(x) f_1(x) \overline{f_2(x)}. $$
The associated norm is denoted by
$$ \Vert f \Vert_\mu = \Vert f \Vert = \sqrt{\langle f,f \rangle}. $$
The invariant probability measure $\mu$ is an element of the measure space $\mathcal{M}^*(V) = \{ \nu: V \to (0,\infty) \}$, which is a subset of the slightly more general set $\mathcal{M}(V) = \{ \nu: V \to [0,\infty) \}$, containing also measures which may vanish on certain vertices. The support of $\nu \in \mathcal{M}(V)$ is defined as
$$ {\rm{supp}}(\nu) = \{ x \in V: \nu(x) \neq 0 \}. $$
The subset of probability measures on $V$ is defined as
$\mathcal{P}(V) = \{ \nu: V \to [0,\infty): \sum_{x \in V} \nu(x) = 1 \}$, and $1_x \in \mathcal{P}(V)$ denotes the delta-function at $x \in V$, that is, $1_x(y) = 0$ for $y \neq x$ and $1_x(x)=1$.

The \emph{degree} of a state $x \in V$ is defined as
\begin{equation} \label{eq:degvertex} 
{\rm{deg}}_x ={\rm{deg}}(x) = \sum_{y \neq x} p_{xy}\mu(x), 
\end{equation}
and we can think of $\rm{deg}$ as an element of $\mathcal{M}(V)$.

We call $G=(V,p,\mu)$ a \emph{reversible Markov chain}, if
$$ p_{xy} \mu(x) = p_{yx} \mu(y) \quad \textrm{for all $x,y \in V$}. $$
In the reversible case, we have $p_{xy} > 0$ if any only if $p_{yx} > 0$, and they induce an undirected simple graph structure on $G$, where the edge set is given by $E(G) = \{ \{x,y\} \in V \times V: p_{xy} > 0\}$. Moreover, reversibility of a finite Markov chain is equivalent to the symmetry of the Laplacian $\Delta$ with respect to the above inner product, and it has non-negative real eigenvalues $\lambda_j = \lambda_j(\Delta)$, ordered as
$$ 0=\lambda_1 \le \lambda_2 \le \cdots \le \lambda_n, $$
and counted with multiplicity. 
In the non-reversible case, the eigenvalues of $\Delta$ are no longer necessarily real. 

There is an alternative description of reversible finite Markov chains, namely, as finite weighted graphs $G=(V,w,\mu)$ with symmetric edge weights $w: V \times V \to [0,\infty)$ satisfying $w(x,x) = 0$ for $x \in V$ and vertex measure $\mu: V \to (0,\infty)$. These edge weights give rise to transition rates $p_{xy}$ 
via 
$$ p_{xy} = \frac{w(x,y)}{\mu(x)}.$$
In this notation, the (symmetric graph) Laplacian agrees with the Markov chain Laplacian and takes the form
\begin{equation} \label{eq:Deltamu}
\Delta f(x) = \Delta_{w,\mu} f(x) = \frac{1}{\mu(x)} \sum_{y \in V} w(x,y)(f(x)-f(y)). 
\end{equation}
The eigenvalues of $\Delta_{w,\mu}$ are real and non-negative. For subsets $A,B \subset V$ and a function $f \in C(V,\C)$, we define the complement of $A$ as $A^c = V \setminus A$ and
\begin{align*}
w(A,B) = \sum_{x \in A, y \in B} w(x,y), \\
\mu(f) = \sum_{x \in V} \mu(x) f(x), \\
\mu(A) = \mu(1_A) = \sum_{x \in A} \mu(x),
\end{align*}
where $1_A$ is the characteristic function of the set $A$. Having introduced the relevant notation, we can now discuss the main results of this paper.

\subsection{Results}

Our  result is based on the following generalized Cheeger constant.

\begin{definition}[Generalized Cheeger constant] 
\label{def:gencheeger} Let $G=(V,w,\mu)$ be a finite weighted graph with symmetric edge weights $w$ and vertex measure $\mu \in \mathcal{M}^*(V)$. For another given vertex measure $\nu \in \mathcal{M}(V)$, the corresponding \emph{generalized Cheeger constant} is defined as follows:
$$ h(\mu,\nu) = \inf_{\substack{\emptyset \neq A \subset V: \\ \nu(A) \le \nu(V)/2}} \frac{w(A,A^c)}{\mu(A)}. $$
\end{definition}
One easily sees that since for each $A\subset V,$ either $A$ or $A^c$ is allowed, $$ h(\mu,\nu)\leq  h(\mu,\mu).$$

We have the following generalization of the classical Cheeger inequality for the first non-trivial eigenvalue of a symmetric Laplacian.

\begin{theorem}[Generalized Cheeger inequality]\label{t:gcheeger} Let $G=(V,w,\mu)$ be a connected finite weighted graph with symmetric edge weights $w$, vertex measure $\mu \in \mathcal{M}^*(V)$, and vertex degree ${\rm{deg}}$ given in \eqref{eq:degvertex}. 
Then we have, for any $\nu \in \mathcal{M}(V)$,
$$ \lambda_2 \ge \frac{1}{2} \cdot h(\mu,\nu) \cdot h({\rm{deg}},\nu), $$
where $\lambda_2$ is the second smallest eigenvalue of the Laplacian $\Delta_{w,\mu}$, given in \eqref{eq:Deltamu}.
\end{theorem}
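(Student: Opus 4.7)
The plan is to adapt the classical Cheeger proof via the Rayleigh quotient, but with two complementary applications of the generalized Cheeger inequality inside a single Cauchy--Schwarz estimate. Let $f$ be a real-valued $\lambda_2$-eigenfunction, so $\sum_x \mu(x) f(x) = 0$. Pick a $\nu$-median $c \in \mathbb{R}$ of $f$, meaning both $\nu(\{f > c\}) \leq \nu(V)/2$ and $\nu(\{f < c\}) \leq \nu(V)/2$ (such a $c$ always exists), and set $g := f - c$ and $g_\pm := \max(\pm g, 0) \ge 0$. Using $\sum \mu g^2 = \sum \mu f^2 + c^2 \mu(V)$, the identity $\sum_{x,y} w(x,y)(g(x)-g(y))^2 = \sum_{x,y} w(x,y)(f(x)-f(y))^2 = 2\lambda_2 \sum \mu f^2$, the edge-wise inequality $(g(x)-g(y))^2 \ge (g_+(x)-g_+(y))^2 + (g_-(x)-g_-(y))^2$, and the identity $\sum \mu g^2 = \sum \mu g_+^2 + \sum \mu g_-^2$, one obtains
$$\lambda_2 \geq \frac{1}{2} \min_{\pm} \frac{\sum_{x,y} w(x,y)(g_\pm(x)-g_\pm(y))^2}{\sum_x \mu(x) g_\pm(x)^2}.$$
Since $\supp(g_\pm) \subset \{\pm g > 0\}$ has $\nu$-measure at most $\nu(V)/2$, it suffices to prove, for any non-negative $h: V \to \mathbb{R}$ with $\nu(\supp h) \leq \nu(V)/2$, the Dirichlet-type inequality
$$\sum_{x,y} w(x,y)(h(x)-h(y))^2 \;\geq\; h(\mu,\nu) \cdot h(\mathrm{deg},\nu) \cdot \sum_x \mu(x) h(x)^2. \qquad (\star)$$

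For $(\star)$, set $A = \sum w(x,y)(h(x)-h(y))^2$, $Q = \sum w(x,y)(h(x)+h(y))^2$, and $P = \sum w(x,y)|h(x)^2-h(y)^2|$. The factorization $|h^2(x)-h^2(y)| = |h(x)-h(y)|(h(x)+h(y))$ and Cauchy--Schwarz give $P^2 \leq A Q$, and the elementary bound $(a+b)^2 \leq 2(a^2+b^2)$ gives $Q \leq 4 \sum_x \mathrm{deg}(x) h(x)^2$. Next, the co-area formula for $h^2$ reads $P = 2\int_0^\infty w(A_t, A_t^c)\, dt$ where $A_t = \{h^2 > t\} \subset \supp(h)$, so $\nu(A_t) \leq \nu(V)/2$ for every $t > 0$, and \emph{both} generalized Cheeger bounds apply: $w(A_t,A_t^c) \ge h(\mathrm{deg},\nu)\,\mathrm{deg}(A_t)$ and $w(A_t,A_t^c) \ge h(\mu,\nu)\,\mu(A_t)$. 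Integrating (using $\int \mathrm{deg}(A_t)\,dt = \sum \mathrm{deg}\cdot h^2$ and $\int \mu(A_t)\,dt = \sum \mu\cdot h^2$) yields
$$P \ge 2\, h(\mathrm{deg},\nu) \sum_x \mathrm{deg}(x) h(x)^2 \ge \frac{h(\mathrm{deg},\nu)}{2}\, Q \quad \text{and} \quad P \ge 2\, h(\mu,\nu) \sum_x \mu(x) h(x)^2.$$

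Combining the first chain with $P^2 \leq AQ$ gives $P \leq 2A / h(\mathrm{deg},\nu)$, which paired with the second lower bound $P \ge 2\,h(\mu,\nu) \sum \mu h^2$ produces exactly $(\star)$; the Rayleigh quotient then yields the claimed estimate. The conceptual hinge, and what I expect to be the main hurdle in designing such a proof, is precisely this asymmetric double use of the two Cheeger constants: one of them is deployed to swap the ``upper'' Cauchy--Schwarz factor $Q$ for the co-area quantity $P$, while the other is what actually converts $P$ into the Dirichlet denominator $\sum \mu h^2$. Everything else (existence of a $\nu$-median, reduction to a non-negative function supported on a $\nu$-half of $V$, and the co-area identities) is routine once this factorization is identified.
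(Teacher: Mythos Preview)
Your proof is correct and follows essentially the same strategy as the paper's: reduce to a non-negative function supported on a $\nu$-half of $V$, then combine Cauchy--Schwarz with the co-area formula and apply the two Cheeger constants $h(\mu,\nu)$ and $h(\deg,\nu)$ to the two resulting factors. The only cosmetic difference is that you subtract a $\nu$-median and split into $g_\pm$, whereas the paper replaces $F$ by $-F$ if necessary so that $\nu(\{F>0\}) \le \nu(V)/2$ and works directly with $F^+$ via the pointwise eigenvalue equation; both reductions lead to the same Dirichlet-type estimate you call $(\star)$.
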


This result is proved and further discussed in Section \ref{sec:cheegineq}.

Using this, we will prove a ``Jammes-type'' Cheeger inequality for symmetric Steklov operators via the convergence result of Hassannezhad and Miclo, which we will discuss later; see Theorem~\ref{t:miclo}. To state it, we need to slightly extend our Cheeger constant in Definition \ref{def:gencheeger} to vertex measures $\mu \in \mathcal{M}(V)$ without full support as follows:
$$ h(\mu,\nu) = \inf_{\substack{A \subset V: \mu(A) > 0, \\ \nu(A) \le \nu(V)/2}} \frac{w(A,A^c)}{\mu(A)}. $$
The proof of the following result is given in Section \ref{sec:jammes}.

\begin{theorem}\label{t:Jammes} Let $G=(V,w,\mu)$ be a connected finite weighted graph with symmetric edge weights $w$ and vertex measure $\mu \in \mathcal{M}^*(V)$, and vertex degree $\rm{deg}$ given in \eqref{eq:degvertex}. Let $B \subset V$ and $\mu_B = 1_B \mu \in \mathcal{M}(V)$. 
Then we have, for any $\nu \in \mathcal{M}(V)$,
$$ 2h(\mu_B,\mu_B)\ge\sigma_2 \ge \frac{1}{2} \cdot h(\mu_B,\nu) \cdot h({\rm{deg}},\nu), $$
where $\sigma_2$ is the second smallest eigenvalue of the Steklov operator $T=T^B$, given in Definition \ref{def:steklov} below. 
\end{theorem}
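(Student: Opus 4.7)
The plan is to prove the upper and lower bounds separately, combining our generalized Cheeger inequality (Theorem~\ref{t:gcheeger}) with the Hassannezhad-Miclo convergence principle (Theorem~\ref{t:miclo}). Throughout, write $\mathcal{E}(u) := \tfrac{1}{2}\sum_{x,y\in V} w(x,y)(u(x)-u(y))^2$ for the Dirichlet energy.

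For the upper bound, I would use the standard variational characterization of the second Steklov eigenvalue,
$$ \sigma_2 = \inf \left\{ \mathcal{E}(u)/\Vert u|_B\Vert_{L^2(B,\mu)}^2 \;:\; u|_B \not\equiv 0,\; \sum_{x \in B}\mu(x) u(x) = 0,\; u \text{ harmonic on } V\setminus B \right\}. $$
Since the harmonic extension minimizes Dirichlet energy among all extensions of given boundary data, the harmonicity constraint can be dropped at the cost of an upper bound. Given any $A \subset V$ with $0 < \mu(A\cap B) \le \mu(B)/2$, I would plug in the test function $u = 1_A - \mu(A\cap B)/\mu(B)$, which is $\mu$-orthogonal to the constants on $B$ by construction. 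A direct computation gives $\mathcal{E}(u) = w(A,A^c)$ and $\Vert u|_B\Vert_{L^2(B,\mu)}^2 = \mu(A\cap B)\,\mu(B\setminus A)/\mu(B) \ge \mu(A\cap B)/2$, hence $\sigma_2 \le 2w(A,A^c)/\mu(A\cap B)$; infimizing over admissible $A$ delivers $\sigma_2 \le 2h(\mu_B,\mu_B)$.

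For the lower bound, I would set $\mu_\epsilon \in \mathcal{M}^*(V)$ with $\mu_\epsilon = \mu$ on $B$ and $\mu_\epsilon = \epsilon\mu$ on $V \setminus B$ for $\epsilon > 0$. By Theorem~\ref{t:miclo}, $\lambda_2(\Delta_{w,\mu_\epsilon}) \to \sigma_2$ as $\epsilon \to 0^+$. Applying Theorem~\ref{t:gcheeger} to the finite weighted graph $(V,w,\mu_\epsilon)$ yields
$$ \lambda_2(\Delta_{w,\mu_\epsilon}) \ge \tfrac{1}{2}\, h(\mu_\epsilon,\nu)\, h({\rm{deg}},\nu), $$
and crucially ${\rm{deg}}(x) = \sum_y w(x,y)$ depends only on the edge weights and is unaffected by the perturbation of the vertex measure. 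The lower bound then reduces to proving the convergence $h(\mu_\epsilon,\nu) \to h(\mu_B,\nu)$ as $\epsilon \to 0$.

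This last convergence is the principal technical step, and it is what motivates extending Definition~\ref{def:gencheeger} to measures without full support. The inequality $h(\mu_\epsilon,\nu) \le h(\mu_B,\nu)$ is immediate since $\mu_\epsilon \ge \mu_B$ pointwise. For the reverse, I would choose minimizers $A_\epsilon$ realizing $h(\mu_\epsilon,\nu)$; since $V$ has only finitely many subsets, one can pass to a subsequence along which $A_{\epsilon_k} = A^*$ is constant. If $A^* \cap B = \emptyset$, connectedness of $G$ gives $w(A^*,(A^*)^c) > 0$ while $\mu_{\epsilon_k}(A^*) = \epsilon_k \mu(A^*) \to 0$, forcing $h(\mu_{\epsilon_k},\nu) \to \infty$ and contradicting the already-established $h(\mu_{\epsilon_k},\nu) \le h(\mu_B,\nu)$. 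Hence $\mu_B(A^*) > 0$ and $h(\mu_{\epsilon_k},\nu) \to w(A^*,(A^*)^c)/\mu_B(A^*) \ge h(\mu_B,\nu)$. Since the same conclusion holds along every convergent subsequence, $\lim_{\epsilon \to 0}h(\mu_\epsilon,\nu) = h(\mu_B,\nu)$, which closes the argument.
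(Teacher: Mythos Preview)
Your proof is correct and follows essentially the same route as the paper. The paper uses the parameter $r\to\infty$ with $\mu_r=(1_B+\tfrac1r 1_{B^c})\mu$, which is your $\mu_\epsilon$ with $\epsilon=1/r$; it handles the Cheeger-constant convergence by writing $1/h(\mu_r,\nu)$ as a maximum of finitely many terms $\mu_r(A)/w(A,A^c)$ and letting each converge, which is equivalent to your subsequence argument. For the upper bound the paper also tests with $1_A$ but bounds $\inf_{c}\|1_A-c\|_{\mu_B}^2\ge\tfrac12\mu_B(A)$ via the quadratic $1-2c+2c^2\ge\tfrac12$, whereas you plug in the explicit mean $c=\mu(A\cap B)/\mu(B)$; both yield the same factor $2$.
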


Note that $h(\mu_B,\mu_B)$ is a discrete analogue of the Escobar Cheeger constant $h_E(M)$ in \eqref{eq:escobar} introduced by \cite{escobar97}, where he proved the Cheeger type estimate \eqref{eq:EscobarEstimate} of the first non-trivial Steklov eigenvalue via the Escobar Cheeger constant and the first eigenvalue of the Laplacian with Robin boundary condition; $h(\mu_B,\deg)$ is a discrete analogue of the Jammes Cheeger constant $h_J(M)$ in \eqref{eq:Jammes} introduced by \cite{jammes15}. 
By the Rayleigh quotient characterization of the first non-trivial Steklov eigenvalue, the Escobar Cheeger constant naturally arises as a potential analogue of the Cheeger constant in the Laplacian case, as indicated by the corresponding upper bound for the eigenvalue. While Escobar’s original estimate involves the first Robin Laplacian eigenvalue, Cheeger-type estimates formulated in terms of the Escobar Cheeger constant remain relatively scarce in the literature. As a corollary of Theorem~\ref{t:Jammes}, by choosing $\nu=\mu_B,$ we prove the estimate of the Steklov eigenvalue via the Escobar Cheeger constant.

\begin{corollary}
    \label{c:Escobar} Let $G=(V,w,\mu)$ be a connected finite weighted graph with symmetric edge weights $w$ and vertex measure $\mu \in \mathcal{M}^*(V)$, and vertex degree $\rm{deg}$ given in \eqref{eq:degvertex}. Let $B \subset V$ and $\mu_B = 1_B \mu \in \mathcal{M}(V)$. 
    Then 
$$ 2h(\mu_B,\mu_B)\ge\sigma_2 \ge \frac{1}{2} \cdot h(\mu_B,\mu_B) \cdot h({\rm{deg}},\mu_B). $$
\end{corollary}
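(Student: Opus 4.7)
The plan is to obtain Corollary~\ref{c:Escobar} as an immediate specialization of Theorem~\ref{t:Jammes} with the auxiliary measure $\nu = \mu_B$. The upper bound $2h(\mu_B,\mu_B) \ge \sigma_2$ appearing in the corollary is already the upper half of Theorem~\ref{t:Jammes} and is independent of $\nu$, so it is inherited verbatim and requires no further argument. The genuine content of the corollary is thus the lower bound.

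To obtain it, I would substitute $\nu = \mu_B$ directly into the lower estimate
$$ \sigma_2 \ge \tfrac{1}{2}\, h(\mu_B,\nu)\, h(\mathrm{deg},\nu) $$
supplied by Theorem~\ref{t:Jammes}. This substitution is legitimate because the definition of $h(\mu,\nu)$ has been extended, in the paragraph preceding Theorem~\ref{t:Jammes}, to accommodate measures in $\mathcal{M}(V)$ without full support; in particular $\mu_B = 1_B \mu$ is an admissible second argument. Unfolding the resulting infima, both $h(\mu_B,\mu_B)$ and $h(\mathrm{deg},\mu_B)$ are computed over subsets $A \subset V$ satisfying $\mu(A \cap B) \le \mu(B)/2$, together with the respective positivity requirements $\mu_B(A) > 0$ and $\mathrm{deg}(A) > 0$; the second is automatic for nonempty $A$ in a connected graph with $|V|\ge 2$, so no issue of vacuousness arises.

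Because the proof is a one-line specialization, there is no substantive obstacle at this step. All of the real work is packaged into Theorem~\ref{t:Jammes}, whose proof couples the generalized Cheeger inequality of Theorem~\ref{t:gcheeger} with the Hassannezhad--Miclo convergence from bounded Laplacian eigenvalues to Steklov eigenvalues as the interior vertex measure degenerates to zero. The corollary merely extracts the canonical choice of $\nu$ that turns $h(\mu_B,\nu)$ into the discrete Escobar-type constant and $h(\mathrm{deg},\nu)$ into its boundary-measure counterpart $h(\mathrm{deg},\mu_B)$, thereby producing the Escobar-flavoured sandwich estimate.
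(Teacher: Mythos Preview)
Your proposal is correct and matches the paper's approach exactly: the corollary is obtained simply by specializing Theorem~\ref{t:Jammes} with the choice $\nu = \mu_B$, and the paper states this explicitly in the sentence preceding the corollary. Your additional remarks on the legitimacy of this substitution (via the extended definition of $h(\mu,\nu)$ for measures without full support) are accurate and appropriate.
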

\begin{remark}
\begin{enumerate}
\item 
By Example~\ref{ex:12}, we see that the previous estimate is sharp.

\item Our estimate improves the Jammes Cheeger estimate proved in \cite{HHW2017}. Let $G=(V,w,\mu)$ be a connected weighted graph and $B\subset V$ such that $w(B,B)=0,$ i.e. there is no edges between any two vertices in $B.$ This is a general setting for the Steklov problem on a subset; see \cite{HHW2017}. We consider the case $\mu=\deg$ for the normalized Steklov operator. The following Jammes Cheeger estimate was proven in \cite[Theorem~1.3]{HHW2017}:  \begin{equation}\label{eq:oldJammes}
       \sigma_2\geq \frac12h(\deg_B,\deg) h(\deg,\deg),\end{equation} where $h(\deg_B,\deg)$ is the Jammes Cheeger constant introduced in \cite{HHW2017} and $h(\deg,\deg)$ is the Cheeger constant for the normalized Laplacian on $G$. By Corollary~\ref{c:Escobar}, we have the following estimate
   \begin{equation}\label{eq:21} 2h(\deg_B,\deg_B)\ge\sigma_2 \ge \frac{1}{2} \cdot h(\deg_B,\deg_B) \cdot h({\rm{deg}},\deg_B).\end{equation} In Example~\ref{ex:13}, our lower bound in \eqref{eq:21} is sharp and better than that in \eqref{eq:oldJammes}. 
   \end{enumerate}
\end{remark}

To discuss our next results, which hold in the more general case of a non-reversible finite Markov chain $G=(V,p,\mu)$, we need to introduce the Steklov operator $T=T^B$, associated to a subset 
$B \subset V$, which we consider as a \emph{set of boundary vertices}. A standing condition will be that there is a directed path from every vertex in $V$ to some vertex in $B$ along directed edges in $E^{or}(G)$. We will refer to this condition simply by saying that ``$V$ is connected to $B$''. The following fact is of crucial importance for the well-definedness of the Steklov operator, given in Defintion \ref{def:steklov} below.

\begin{theorem}[Dirichlet Problem]
\label{thm:Dirich-Prob}
Let $G=(V,p,\mu)$ be a (possibly non-reversible) finite Markov chain and $B \subset V$. Assume that $V$ is connected to $B$. Then for any $f \in C(B,\mathbb{C})$ there exists a unique $F \in C(V,\mathbb{C})$ satisfying
$$
\begin{cases}
    \Delta F(x) = 0 & \textrm{for all $x \in B^c$,} \\
    F(x) = f(x) & \textrm{for all $x \in B$.}
\end{cases}
$$
The function $F$ is called the \emph{(unique) harmonic extension} of $f$. 
\end{theorem}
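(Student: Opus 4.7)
The plan is to reduce the existence-and-uniqueness claim to a pure uniqueness statement using finite-dimensional linear algebra, and then to establish uniqueness via a discrete maximum principle adapted to the directed edge structure of a non-reversible chain.

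First I would set up the linear algebra: the prescribed conditions $F(x) = f(x)$ for $x \in B$ together with $\Delta F(x) = 0$ for $x \in B^c$ form a system of $|V|$ linear equations on the $|V|$-dimensional space $C(V,\mathbb{C})$. By the rank--nullity theorem, the associated linear map from $C(V,\mathbb{C})$ to $C(B,\mathbb{C}) \oplus C(B^c,\mathbb{C})$ is bijective if and only if it is injective, so existence for every boundary datum $f$ is equivalent to uniqueness in the homogeneous case $f \equiv 0$. Splitting into real and imaginary parts (both of which are harmonic on $B^c$ and vanish on $B$), it suffices to prove: if $F \colon V \to \mathbb{R}$ satisfies $F|_B = 0$ and $\Delta F \equiv 0$ on $B^c$, then $F \equiv 0$.

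Next I would run the maximum principle. For $x \in B^c$, the hypothesis that $V$ is connected to $B$ supplies a directed path from $x$ to $B$ in $E^{or}(G)$; in particular $x$ has at least one outgoing edge, so $s(x) := \sum_{y} p_{xy} > 0$. The equation $\Delta F(x) = 0$ rewrites as
$$ F(x) = \frac{1}{s(x)} \sum_{y \in V} p_{xy} F(y), $$
exhibiting $F(x)$ as a convex combination of the values of $F$ at the out-neighbors of $x$. Suppose $M := \max_V F$ is attained at some $x_0 \in B^c$. Then the convex combination identity forces $F(y) = M$ for every $y$ with $p_{x_0 y} > 0$, and iterating along a directed path from $x_0$ to $B$ yields $M = F(b) = 0$ for some $b \in B$. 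Applying the same argument to $-F$ gives $\min_V F = 0$, and hence $F \equiv 0$.

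The one point requiring care is the direction of the edges: because the chain may be non-reversible, the averaging identity at an interior vertex $x$ involves its \emph{out}-neighbors, and it is precisely the stated hypothesis (directed paths from every vertex to $B$) that lets the maximum propagate all the way to the boundary. In the reversible setting this reduces to ordinary graph connectedness, but in general the directional version is exactly what the argument consumes. Once this is observed, the remainder of the proof is routine, and I expect no further obstacles.
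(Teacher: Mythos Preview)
Your proposal is correct and follows essentially the same route as the paper: both reduce existence to uniqueness via finite-dimensional linear algebra (the paper packages this as injectivity of the operator $\tilde\Delta_B$), and both derive uniqueness from a maximum principle propagated along directed paths to $B$. The only cosmetic difference is that you split into real and imaginary parts and use the real convex-combination maximum principle, whereas the paper works directly with the complex modulus via its Lemma~\ref{lem:neighbourharm}; the paper itself notes the real-valued variant in Remark~\ref{rem:locglobMP}, so the two arguments are interchangeable here.
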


This result gives rise to a well-defined operator ${\rm{Ext}}: C(B,\C) \to C(V,\C)$, where ${\rm{Ext}}(f)$ is the harmonic extension of $f$, and the Steklov operator is defined as follows:

\begin{definition}[Steklov operator]
\label{def:steklov}
Let $G=(V,p,\mu)$ be a (possibly non-reversible) Markov chain and $B \subset V$ a subset such that $V$ is connected to $B$. Then the \emph{Steklov operator} $T=T^B: C(B,\C) \to C(B,\C)$ is given by $T = \Delta \circ {\rm{Ext}}$, that is,
$$ T f(x) = \Delta F (x) \quad \text{for $x \in B$}, $$
where $F$ is the harmonic extension of $f$.
\end{definition}

Similarly as in the case of the Laplacian, the eigenvalues $\sigma_j = \sigma_j(T)$ of the Steklov operator are real-valued and non-negative in the reversible case, and they can be ordered as
$$ 0 = \sigma_1 \le \sigma_2 \le \cdots \le \sigma_b, \quad b = |B|,$$
and counted with multiplicity. Again, in the non-reversible case, these eigenvalues are no longer necessarily real. 

Our next result is an extension of a result by Hassannezhad and Miclo \cite{hassannezhad-miclo20} to the non-symmetric case and  states a particular limit behaviour of the complex eigenvalues of the operators $\Delta_r = (1_B+r1_{B^c})\Delta$, as $r \to \infty$, which can be viewed as speeding up the Markov chain on the interior vertices $B^c = V \setminus B$ of a finite Markov chain. Some of these eigenvalues have the property that their real parts escape to infinity, while the other $b=|B|$ eigenvalues converge to the eigenvalues of the Steklov operator $T$. In fact, the $n=|V|$ eigenvalues of the operators $\Delta_r$ can be expressed by continuous functions $\lambda_1(r),\dots,\lambda_n(r)$ in the parameter $r > 0$ (without any specific ordering). 
For convenience, we recall the result of Hassannezhad and Miclo about the convergence of eigenvalues in the reversible case.
\begin{theorem}[{\cite[Proposition 3]{hassannezhad-miclo20}}]\label{t:miclo}
    Let $G=(V,p,\mu)$ be a finite 
reversible Markov chain with invariant measure $\mu$, $B \subset V$ and $b=|B|$ and $n=|V|$. Assume $\lambda_1(r)\leq\dots\leq \lambda_n(r).$ Then for any $1\leq k\leq b,$
$$\lim_{r\to\infty}\lambda_k(r)=\sigma_k,$$ and for $k>b,$ $$\lim_{r\to\infty}\lambda_k(r)=\infty.$$
\end{theorem}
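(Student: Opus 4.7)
The plan is to exploit reversibility through the variational characterization of eigenvalues. The key observation is that $\Delta_r = \Delta_{w,\mu_r}$ where $\mu_r := 1_B\mu + \tfrac{1}{r}1_{B^c}\mu$, so $\Delta_r$ is symmetric with respect to $\langle\cdot,\cdot\rangle_{\mu_r}$ and
\[\lambda_k(r) = \min_{\dim U = k}\max_{f\in U\setminus\{0\}} \frac{\mathcal{E}(f)}{\|f\|_{\mu_r}^2}, \qquad \mathcal{E}(f) := \tfrac{1}{2}\sum_{x,y}w(x,y)|f(x)-f(y)|^2,\]
where $\mathcal{E}$ is independent of $r$ and $\|f\|_{\mu_r}^2 = \|f\|_{\mu_B}^2 + \tfrac{1}{r}\|f\|_{\mu_{B^c}}^2$ is non-increasing in $r$. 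Thus each $\lambda_k(r)$ is non-decreasing in $r$ and has a limit $\ell_k\in[0,\infty]$.

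For the upper bound $\ell_k\le\sigma_k$ when $k\le b$, I would test against the $k$-dimensional subspace $U = \mathrm{Ext}(U')$, where $U'\subset C(B,\C)$ is spanned by the first $k$ Steklov eigenfunctions. Since $\Delta\,\mathrm{Ext}(g)=0$ on $B^c$, integration by parts gives $\mathcal{E}(\mathrm{Ext}(g)) = \langle Tg,g\rangle_{\mu_B}$, and since $\mathrm{Ext}(g)|_B = g$, we have $\|\mathrm{Ext}(g)\|_{\mu_r}^2 \ge \|g\|_{\mu_B}^2$. Hence the Rayleigh quotient on $U$ is dominated by $\langle Tg,g\rangle_{\mu_B}/\|g\|_{\mu_B}^2$, whose maximum on $U'$ equals $\sigma_k$, so $\lambda_k(r)\le\sigma_k$ for every $r>0$.

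The matching lower bound is the heart of the proof. Fix $k\le b$ and let $f_1^{(r)},\ldots,f_k^{(r)}$ be eigenfunctions orthonormal in $\langle\cdot,\cdot\rangle_{\mu_r}$. From $\|f_j^{(r)}\|_{\mu_B}\le 1$ one obtains a uniform $L^\infty$-bound on $f_j^{(r)}|_B$, hence on $F_j^{(r)}:=\mathrm{Ext}(f_j^{(r)}|_B)$ by the maximum principle. The difference $g_j^{(r)}:=f_j^{(r)}-F_j^{(r)}$ vanishes on $B$ and satisfies $\mathcal{E}(g_j^{(r)}) = \mathcal{E}(f_j^{(r)}) - \mathcal{E}(F_j^{(r)})\le\sigma_k$ (by orthogonality in the Dirichlet form); a telescoping estimate along paths in the connected graph $G$ converts this energy bound into a uniform $L^\infty$-bound on $g_j^{(r)}$, hence on $f_j^{(r)}$. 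Extract a subsequence $r_n\to\infty$ along which $f_j^{(r_n)}\to f_j^{(\infty)}$ pointwise for each $j$. Evaluating the eigenvalue equation on $B^c$ gives $\Delta f_j^{(r_n)}(x) = (\lambda_j(r_n)/r_n)f_j^{(r_n)}(x)\to 0$, so $\Delta f_j^{(\infty)}=0$ on $B^c$, i.e.\ $f_j^{(\infty)}=\mathrm{Ext}(f_j^{(\infty)}|_B)$; evaluating it on $B$ identifies $f_j^{(\infty)}|_B$ as a $T$-eigenfunction with eigenvalue $\ell_j$. Passing orthonormality to the limit, $\{f_j^{(\infty)}|_B\}_{j=1}^k$ is orthonormal in $(C(B,\C),\langle\cdot,\cdot\rangle_{\mu_B})$, and the Steklov min-max applied to its span yields $\sigma_k\le\ell_k$. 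Hence $\ell_k=\sigma_k$.

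For $k>b$ I would use the dual max-min. Any $(n-k+1)$-dimensional subspace $U\subset\{f\in C(V,\C):f|_B=0\}$ (which exists since $n-b\ge n-k+1$) consists of functions with $\mathcal{E}(f)>0$ (by connectedness and $B\ne\emptyset$) and $\|f\|_{\mu_r}^2 = \tfrac{1}{r}\|f\|_{\mu_{B^c}}^2$, so compactness of the unit sphere in $U$ yields a constant $c_U>0$ with $\lambda_k(r)\ge rc_U\to\infty$. The principal obstacle is the uniform $L^\infty$-bound on $f_j^{(r)}|_{B^c}$ in the lower-bound step; without it, concentration or blow-up of the $B^c$-part of an eigenfunction would preclude identifying the limit with a Steklov eigenfunction.
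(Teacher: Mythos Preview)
Your variational approach is essentially correct and matches what the paper attributes to Hassannezhad and Miclo's original argument: the paper itself does not prove Theorem~\ref{t:miclo} but cites it, remarking that ``the crucial ingredients are that the eigenvalues are real and canonically ordered, and that there is a useful Rayleigh quotient characterization.'' Your outline---monotonicity of $\lambda_k(r)$ in $r$, testing against harmonic extensions for the upper bound, compactness of eigenfunctions plus Steklov min--max for the lower bound, and the Courant--Fischer max--min on $\{f:f|_B=0\}$ for divergence when $k>b$---is exactly that programme, and the energy/path-telescoping argument you give for the uniform $L^\infty$-bound on $B^c$ is correct and removes the obstacle you flagged.

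The paper's own contribution is a genuinely different route to the (more general, non-reversible) Theorem~\ref{thm:convlapsteklov}. There it avoids Rayleigh quotients entirely: the escaping eigenvalues are obtained by rescaling $\Delta_r$ to a matrix $H_r$ converging to $\left(\begin{smallmatrix}0&0\\0&L_{22}\end{smallmatrix}\right)$ and invoking Gershgorin, while the convergent eigenvalues come from showing that the resolvents $R_r=(1+\Delta_r)^{-1}$ form a Cauchy sequence in operator norm (via a semigroup comparison and a barrier function $u=(1_B+\Delta)^{-1}1_{B^c}$), with limit $R_\infty={\rm Ext}\circ(1+T)^{-1}\circ\iota_B$. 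Your method is shorter and more transparent in the reversible case but collapses without symmetry; the paper's resolvent method is heavier but carries over to non-reversible chains, which is its point.
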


It was mentioned in Hassannezhad and Miclo \cite[Remark 4]{hassannezhad-miclo20} that there should be a generalization to the non-reversible case and we provide a positive answer to their remark in the following. To present our precise result for non-reversible Markov chains, we need to introduce the block matrix representation
\begin{equation} \label{eq:Deltablock}
  \Delta = 
  \begin{pmatrix}
    L_{11} & L_{12}\\
    L_{21} & L_{22}
  \end{pmatrix}
\end{equation}
of $\Delta$ with respect to a vertex enumeration starting with the vertices of $B$ and ending with the vertices of $B^c$. Note that $L_{22}$ is a matrix of size $(n-b) \times (n-b)$ and corresponds to the Laplacian on $C(B^c,\C)$ with Dirichlet boundary conditions.

\begin{theorem} \label{thm:convlapsteklov} 
Let $G=(V,p,\mu)$ be a (possibly non-reversible) finite Markov chain, $B \subset V$ and $b=|B|$ and $n=|V|$.
Assume that $V$ is connected to $B$. Let $\lambda_1(r),\dots,\lambda_n(r) \in \C$ be the continuous (in $r$) eigenvalue functions of the operators $\Delta_r = (1_B+r1_{B^c})\Delta$. Then the minimum $\epsilon$ of the real parts of the eigenvalues of $L_{22}$ in \eqref{eq:Deltablock} is strictly positive, and there exists a permutation $\pi: [n] \to [n]$, such that 
$$ \lambda_{j,\infty}= \lim_{r \to \infty} \lambda_{\pi(j)}(r) \in \C $$ 
exist for all $j \in [b]$ and agree with the eigenvalues $\sigma_j$ of the Steklov operator $T$, counted with multiplicity, and that
$$ \liminf_{r \to \infty} \frac{1}{r} {\rm{Re}} \lambda_{\pi(j)}(r) \ge \epsilon \quad \text{for all $j \in \{b+1,\dots,n\}$.} $$
In particular, the real parts of the eigenvalues $\lambda_{\pi(j)}(r)$, $j \ge b+1$, escape to infinity, as $r \to \infty$. 
\end{theorem}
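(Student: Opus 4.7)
The plan is to use the block decomposition \eqref{eq:Deltablock} to express the Steklov operator as the Schur complement $T = L_{11} - L_{12} L_{22}^{-1} L_{21}$, and then analyse the eigenvalues of $\Delta_r$ through two applications of Hurwitz's theorem to suitably rescaled characteristic polynomials. First I would establish the two spectral properties of $L_{22}$ that underlie everything: it is invertible, and every eigenvalue $\mu$ of $L_{22}$ has strictly positive real part (so $\epsilon = \min\Re\sigma(L_{22}) > 0$). Invertibility follows directly from Theorem \ref{thm:Dirich-Prob}: any $g \in \ker L_{22}$, extended by $0$ on $B$, would be a harmonic extension of $0 \in C(B,\C)$, forcing $g = 0$ by uniqueness. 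For the positivity, observe that $L_{22}$ has non-positive off-diagonal entries and positive diagonal entries, so the semigroup $e^{-tL_{22}}$ is a non-negative matrix representing the Markov chain killed upon hitting $B$; the assumption that $V$ is connected to $B$ forces absorption in finite time, hence $\|e^{-tL_{22}}\| \to 0$ as $t \to \infty$, so the spectral radius of $e^{-tL_{22}}$ is less than $1$, which is equivalent to $\min\Re\sigma(L_{22}) > 0$. The Schur complement identification of $T$ is then immediate from Theorem \ref{thm:Dirich-Prob}: the harmonic extension of $f \in C(B,\C)$ takes the value $-L_{22}^{-1} L_{21} f$ on $B^c$, and hence $Tf = (\Delta F)|_B = (L_{11} - L_{12} L_{22}^{-1} L_{21})f$.

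Next I would analyse the bounded eigenvalues of $\Delta_r$ by rescaling the characteristic polynomial. Factoring $r$ out of the last $n-b$ rows of $\Delta_r - \lambda I$ yields
$$
  \det(\Delta_r - \lambda I) = r^{n-b} Q_r(\lambda), \qquad Q_r(\lambda) := \det\begin{pmatrix} L_{11} - \lambda I & L_{12} \\ L_{21} & L_{22} - (\lambda/r) I \end{pmatrix}.
$$
For every $\lambda \in \C$, $Q_r(\lambda) \to Q_\infty(\lambda) := \det(L_{22}) \det(T - \lambda I)$ as $r \to \infty$, with convergence locally uniform in $\lambda$. The polynomial $Q_\infty$ has degree $b$ and its roots with multiplicity are precisely $\sigma_1,\dots,\sigma_b$. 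By Hurwitz's theorem, for any disk $D \subset \C$ containing $\sigma_1,\dots,\sigma_b$, all sufficiently large $r$ make exactly $b$ roots of $Q_r$ lie in $D$, and these roots converge with correct multiplicities to the $\sigma_j$. These $b$ roots are eigenvalues of $\Delta_r$, and they single out $b$ of the continuous branches $\lambda_j(r)$; defining $\pi$ on $[b]$ so that these correspond to $\pi(1),\dots,\pi(b)$ gives the desired convergence $\lambda_{\pi(j)}(r) \to \sigma_j$.

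For the remaining $n - b$ branches I would rescale to $\mu = \lambda/r$ and repeat the argument. A parallel block computation gives
$$
  r^{-n} \det(\Delta_r - r\mu I) \longrightarrow (-\mu)^b \det(L_{22} - \mu I) \quad \text{as } r \to \infty,
$$
uniformly on compact subsets of $\C$. The limiting polynomial has a zero of order $b$ at $\mu = 0$ (which accounts for the $b$ bounded branches already identified) and remaining zeros equal to the eigenvalues of $L_{22}$ with their algebraic multiplicities. Hurwitz's theorem forces the rescaled eigenvalues $\lambda_{\pi(j)}(r)/r$ for $j > b$ to cluster in arbitrarily small neighbourhoods of $\sigma(L_{22})$, each element of which has real part at least $\epsilon$. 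This yields $\liminf_{r\to\infty}\Re(\lambda_{\pi(j)}(r))/r \ge \epsilon$ for $j > b$, completing the proof.

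The main obstacle I foresee is the rigorous bookkeeping needed to assemble the permutation $\pi$ from two independent Hurwitz applications, especially since in the non-reversible setting $\Delta_r$ need not be diagonalisable and one must work with algebraic multiplicities throughout. One must ensure that the $b$ bounded branches detected in the $\lambda$-plane are exactly the $b$ branches whose rescaled values $\lambda_{\pi(j)}(r)/r$ tend to $0$, and that no continuous branch oscillates between the bounded and escaping regimes. This coherence ultimately follows by counting: Hurwitz pins down exactly $b$ branches in any fixed bounded region and the remaining $n - b$ branches in an arbitrarily small neighbourhood of $\sigma(L_{22})$ once rescaled, so the two applications are automatically consistent.
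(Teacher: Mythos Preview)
Your proof is correct and takes a genuinely different route from the paper's. The paper handles the two regimes with different machinery: for the escaping eigenvalues it observes that $\Delta_r$ is similar to $rH_r$ with $H_r \to \left(\begin{smallmatrix} 0 & 0 \\ 0 & L_{22}\end{smallmatrix}\right)$ and then invokes Kato's continuity theorems (this is close in spirit to your rescaled-polynomial argument), but for the convergent eigenvalues it builds a resolvent-convergence theory, proving that $R_r=(1+\Delta_r)^{-1}$ converges in operator norm to a limit $R_\infty$ whose nonzero eigenvalues are exactly $(1+\sigma_j)^{-1}$. That step requires a heat-semigroup contraction lemma and a barrier function $u=(1_B+\Delta)^{-1}1_{B^c}$ justified via maximum principles. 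You replace all of this by the Schur-complement identity $T=L_{11}-L_{12}L_{22}^{-1}L_{21}$ and two applications of Hurwitz's theorem to $\det(\Delta_r-\lambda I)$ and its rescaling by $r^{-n}$; this is shorter and uses only elementary linear algebra and complex analysis. What the paper's route buys is convergence at the level of resolvents (hence of spectral projections, not merely eigenvalue locations) and a framework more amenable to infinite-dimensional generalisations. For the positivity of $\epsilon$, the paper uses Gershgorin plus invertibility of $L_{22}$, while you use decay of the killed semigroup; both work. Your bookkeeping worry is legitimate but your resolution is sound: for all large $r$ the bounded eigenvalues lie in a fixed disk $|\lambda|<M$ while the escaping ones satisfy $\Re\lambda>r\epsilon/2$, and once $r\epsilon/2>M$ no continuous branch can cross the gap, so the partition into $b$ bounded and $n-b$ escaping branches is eventually stable.
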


In the proof of the reversible case in  \cite{hassannezhad-miclo20},  the crucial ingredients are that the eigenvalues are real and canonically ordered, and that there is a useful Rayleigh quotient characterization of eigenvalues of the Laplacian.
The novelty of the proof in the non-reversible case, given in Section \ref{sec:stekconv}, is the convergence of the resolvents of operators. Our arguments are based on linear algebra and maximum principles.

\section{Symmetric Steklov operators and Laplacians}

As a warm-up for this section, we express the Steklov operator via effective resistances.
In the subsequent subsections, we prove generalized Cheeger type estimates for the first nontrivial eigenvalue of the Laplacian. Applying this together with the result of Hassannezhad and Miclo, we prove Jammes Cheeger inequalities for the Steklov operator.

\subsection{Steklov operator via effective resistance}

Let $G=(V,w,\mu)$ be a finite weighted graph with symmetric edge weights $w$, and $B \subset V$.
The Steklov operator can be represented as a symmetric graph Laplacian, that is, for $x \in B$,
\begin{equation}
\label{eq:Tflap}
Tf (x) = \frac 1{\mu(x)} \sum_{y \in B} (f(x)-f(y)) w_B(x,y)
\end{equation}
for a suitable symmetric $w_B: B\times B \to [0,\infty) $, see, e.g., \cite{HHW2017,hassannezhad-miclo20}. Later, in Section \ref{sec:maxprinc}, we show that Steklov operators can also be written as Laplacians on the boundary in the non-symmetric setting.

For expressing $w_B$, we will use the effective resistance or the capacity respectively. 
We recall for $X,Y \subset V$, the capacity is defined as
\[
\capa(X,Y) := \inf \{\langle f,\Delta f \rangle_\mu : f|_X=1, f|_Y=0 \}
\]
and $\Reff(X,Y) :=1/\capa(X,Y)$. 
From an electric network perspective, the capacity is the current flowing from $X$ to $Y$, assuming the electric potential $1$ at $X$ and $0$ at $Y$, where each edge $(x,y)$ is a resistor with resistance $1/w(x,y)$.

\begin{theorem} \label{thm:stekreff}
For two distinct vertices $x,y \in B$, we have
\[
2w_B(x,y) = \capa(\{x\},B \setminus \{x\}) + \capa(\{y\},B \setminus \{y\}) - \capa(\{x,y\},B \setminus \{x,y\}).
\]
\end{theorem}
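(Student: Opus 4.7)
The plan is to express all three capacities via the Dirichlet energy of harmonic extensions and then exploit the linearity of the harmonic extension map together with the bilinearity of the Dirichlet form.

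First, for any subset $A \subset B$, let $F_A \in C(V,\C)$ be the unique harmonic extension of $1_A|_B$ (that is, $F_A = 1$ on $A$, $F_A = 0$ on $B \setminus A$, and $\Delta F_A = 0$ on $B^c$). I would observe that the minimizer in the variational definition of $\capa(A, B \setminus A)$ is precisely $F_A$, since the Euler--Lagrange equations for the Dirichlet energy under the boundary constraint $f|_A = 1$, $f|_{B \setminus A} = 0$ force harmonicity on $V \setminus B = B^c$. Using Green's formula $\langle f, \Delta g \rangle_\mu = \tfrac{1}{2}\sum_{a,b} w(a,b)(f(a)-f(b))(g(a)-g(b))$, this gives
\[
\capa(A, B \setminus A) = \mathcal{E}(F_A) := \langle F_A, \Delta F_A \rangle_\mu.
\]

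Next, since the harmonic extension map is linear, $F_{\{x,y\}} = F_{\{x\}} + F_{\{y\}}$. Expanding the Dirichlet energy bilinearly,
\[
\capa(\{x,y\}, B \setminus \{x,y\}) = \mathcal{E}(F_{\{x\}}) + 2\,\mathcal{E}(F_{\{x\}}, F_{\{y\}}) + \mathcal{E}(F_{\{y\}}),
\]
so rearranging the identity I want to prove is equivalent to showing
\[
\mathcal{E}(F_{\{x\}}, F_{\{y\}}) = -w_B(x,y).
\]

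To establish this cross-term identity, I would use the symmetric form $\mathcal{E}(F_{\{x\}}, F_{\{y\}}) = \langle F_{\{x\}}, \Delta F_{\{y\}}\rangle_\mu$ and note that $\Delta F_{\{y\}}$ vanishes on $B^c$ by harmonicity, while $F_{\{x\}}$ on $B$ equals the delta function $\delta_x$. Hence
\[
\mathcal{E}(F_{\{x\}}, F_{\{y\}}) = \sum_{z \in B} \mu(z)\,\delta_x(z)\,\Delta F_{\{y\}}(z) = \mu(x)\,\Delta F_{\{y\}}(x) = \mu(x)\,T\delta_y(x).
\]
Finally, the Laplacian representation \eqref{eq:Tflap} of the Steklov operator yields
\[
\mu(x)\,T\delta_y(x) = \sum_{z \in B}(\delta_y(x) - \delta_y(z))\,w_B(x,z) = -w_B(x,y),
\]
since $x \neq y$. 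Combining the three steps gives the claimed formula. The only subtle point is the variational identification of the capacity minimizer as the harmonic extension, but this is standard for symmetric graph Laplacians; the rest is bookkeeping via Green's identity and the Laplacian form of $T$.
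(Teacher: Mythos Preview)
Your proof is correct and follows essentially the same route as the paper's. Both arguments identify $\capa(A,B\setminus A)$ with the Dirichlet energy of the harmonic extension $F_A$ (equivalently, with $\langle T1_A,1_A\rangle_{\mu_B}$), then obtain $w_B(x,y)$ via polarization: you expand the bilinear Dirichlet form of $F_{\{x\}}+F_{\{y\}}$, while the paper polarizes the quadratic form $\langle T\cdot,\cdot\rangle_{\mu_B}$ directly on $B$, but these are the same computation since $\mathcal{E}(F_A,F_{A'})=\langle T1_A,1_{A'}\rangle_{\mu_B}$.
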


\begin{proof}
We first claim that for all $X \subseteq B$,
\[
\langle T 1_X, 1_X \rangle_{\mu_B} = \capa(X,B\setminus X).
\]
To prove this, let $f$ satisfy $f|_X = 1$ and $f|_{B\setminus X} = 0$ and $\Delta f = 0 $ on $V \setminus B$.
Then on $B$, we have $T1_X = \Delta f$. Moreover, $\langle f, \Delta f\rangle_\mu = \capa(X,B\setminus X)$ as $f$ is a minimizer for the capacity, by a variational principle.
As $\supp (\Delta f) \subseteq B$, we have
\[
\capa(X,B\setminus X)=
\langle f, \Delta f\rangle_\mu = \langle f|_B, (\Delta f)|_B\rangle_{\mu_B}
= \langle 1_X, T1_X\rangle_{\mu_B},
\]
proving our first claim.
We next observe that for two distinct $x,y \in B$, by using \eqref{eq:Tflap},
\[
-\langle T 1_x, 1_y \rangle_{\mu_B} =  w_B(x,y). 
\]
Hence, we can apply the polarization formula to obtain
\begin{align*}
  2w_B(x,y)  &= -2\langle T 1_x, 1_y \rangle_{\mu_B}  \\&= \langle T 1_x, 1_x \rangle_{\mu_B}  + \langle T 1_y, 1_y \rangle_{\mu_B} - \langle T (1_x + 1_y), 1_x + 1_y \rangle_{\mu_B}  \\
  &=\capa(\{x\},B \setminus \{x\}) + \capa(\{y\},B \setminus \{y\}) - \capa(\{x,y\},B \setminus \{x,y\}),
\end{align*}
finishing the proof.
\end{proof}

\begin{corollary} \label{cor:stekrefftwoverts}
    Let $x,y \in V$ be two distinct vertices and $B = \{x,y\}$. Then we have
    $$ \sigma_2 = \left( \frac{1}{\mu(x)} + \frac{1}{\mu(y)} \right) \capa(\{x\},\{y\}). $$
\end{corollary}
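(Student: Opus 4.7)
The plan is to combine Theorem \ref{thm:stekreff} with a direct $2 \times 2$ matrix computation, since when $|B|=2$ the Steklov operator is just a $2\times 2$ matrix whose eigenvalues we can read off from its trace and determinant.

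First I would apply Theorem \ref{thm:stekreff} to the edge weight $w_B(x,y)$ with $B=\{x,y\}$. In this case the three capacity terms simplify dramatically: the sets $B\setminus\{x\}=\{y\}$ and $B\setminus\{y\}=\{x\}$ give two copies of $\capa(\{x\},\{y\})$ (using symmetry of capacity), while $B\setminus\{x,y\}=\emptyset$ forces $\capa(\{x,y\},\emptyset)=0$ because the defining infimum is attained by the constant function $f\equiv 1$, yielding $\langle f,\Delta f\rangle_\mu=0$. Hence $2w_B(x,y)=2\capa(\{x\},\{y\})$, so $w_B(x,y)=\capa(\{x\},\{y\})$.

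Next I would use the Laplacian representation \eqref{eq:Tflap} of $T$ on $B$. Writing $T$ in the basis $\{1_x,1_y\}$, we get
\[
T=\begin{pmatrix}
\frac{w_B(x,y)}{\mu(x)} & -\frac{w_B(x,y)}{\mu(x)}\\[2pt]
-\frac{w_B(x,y)}{\mu(y)} & \frac{w_B(x,y)}{\mu(y)}
\end{pmatrix}.
\]
This matrix has determinant $0$ and trace $w_B(x,y)\bigl(\tfrac{1}{\mu(x)}+\tfrac{1}{\mu(y)}\bigr)$. Since the Steklov operator is self-adjoint with respect to $\langle\cdot,\cdot\rangle_{\mu_B}$, its eigenvalues are $\sigma_1=0$ and $\sigma_2=\mathrm{tr}(T)$. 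Substituting $w_B(x,y)=\capa(\{x\},\{y\})$ from the first step yields the claimed identity.

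There is essentially no obstacle here; the only point that requires a moment of care is handling the degenerate capacity $\capa(\{x,y\},\emptyset)$, which should be treated as vacuously admitting the constant test function. The rest is a two-line matrix calculation.
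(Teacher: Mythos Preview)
Your proposal is correct and follows essentially the same route as the paper: apply Theorem \ref{thm:stekreff} to obtain $w_B(x,y)=\capa(\{x\},\{y\})$ (using $\capa(\{x,y\},\emptyset)=0$), write $T$ as the $2\times 2$ matrix from \eqref{eq:Tflap}, and read off $\sigma_2=\mathrm{tr}(T)$. The only cosmetic difference is that you justify $\sigma_2=\mathrm{tr}(T)$ via self-adjointness and the vanishing determinant, whereas the paper just invokes the trace directly.
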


\begin{proof}
    It follows from Theorem \ref{thm:stekreff} that
    $$ w_B(x,y) = \capa(\{x\},\{y\}) $$
    since $\capa(\{x,y\},\emptyset) = 0,$ and by \eqref{eq:Tflap}, the Steklov operator $T$ has the following matrix representation:
    $$ T = \begin{pmatrix}  \frac{w_B(x,y)}{\mu(x)} & - \frac{w_B(x,y)}{\mu(x)} \\ -\frac{w_B(x,y)}{\mu(y)} & \frac{w_B(x,y)}{\mu(y)}\end{pmatrix}. $$
    Hence, we have
    $$ \sigma_2 = {\rm{tr}}(T) = \frac{w_B(x,y)}{\mu(x)} + \frac{w_B(x,y)}{\mu(y)}, $$
    which finishes the proof.
\end{proof}

\subsection{A generalized Cheeger inequality for the Laplacian} \label{sec:cheegineq}

For a reversible Markov chain $G=(V,p,\mu)$ and for an additional vertex measure $\nu \in \mathcal{M}(V),$ we recall the generalized Cheeger constant
$$
    h(\mu,\nu) = \inf_{\substack{\emptyset \neq A \subset V: \\ \nu(A) \le \nu(V)/2}} \frac{w(A,A^c)}{\mu(A)}.
$$
With this notion, we recall the generalized Cheeger inequality (Theorem \ref{t:gcheeger}) from the introduction.


\begin{theorem}[Generalized Cheeger inequality] \label{thm:gencheegineq}
Let $G=(V,p,\mu)$ be a connected finite reversible Markov chain 
with edge weights $w: E \to (0,\infty)$ and vertex measure $\mu \in \mathcal{M}^*(V)$. 
Then we have, for any $\nu \in \mathcal{M}(V)$,
$$ \lambda_2(\Delta_{w,\mu}) \ge \frac{1}{2}\cdot h(\mu,\nu) \cdot h(\deg,\nu). $$
\end{theorem}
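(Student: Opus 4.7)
The plan is to adapt the classical Dodziuk--Alon--Milman co-area proof of the Cheeger inequality, twisted so that the two generalized Cheeger constants $h(\mu,\nu)$ and $h(\deg,\nu)$ appear multiplicatively rather than as the square of a single constant. The key idea is to replace one of the usual two applications of the Cheeger constant by a bound that exchanges $\|g\|_\mu^2$ for the ``$\deg$-weighted norm'' $\sum_x\deg(x)g(x)^2$, which is then absorbed via a bootstrap.

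I would begin with an eigenfunction $f$ of $\Delta_{w,\mu}$ for $\lambda_2$; by connectedness $\lambda_2>0$ and $f\perp_\mu 1$ (the degenerate case $\nu(V)=0$ trivializes the inequality and can be dismissed). Pick a $\nu$-median $c$ of $f$ and set $f_{\pm}:=(f-c)_{\pm}\ge 0$. A pointwise four-case check gives $(f_+(x)-f_+(y))^2+(f_-(x)-f_-(y))^2\le(f(x)-f(y))^2$, while $\mu$-orthogonality of $f$ to $1$ gives $\|f_+\|_\mu^2+\|f_-\|_\mu^2=\|f-c\|_\mu^2\ge\|f\|_\mu^2$. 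Writing $\mathcal{E}(h):=\tfrac12\sum_{x,y}w(x,y)(h(x)-h(y))^2$, summation of the former together with $\mathcal{E}(f)=\lambda_2\|f\|_\mu^2$ forces at least one $g\in\{f_+,f_-\}$ to satisfy $\mathcal{E}(g)\le\lambda_2\|g\|_\mu^2$ and $\nu(\mathrm{supp}\,g)\le\nu(V)/2$.

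The central step is a co-area argument on $g\ge 0$ with superlevel sets $A_t:=\{g>t\}$. Starting from the layer-cake formula $g(x)^2=\int_0^\infty 2t\,1_{A_t}(x)\,dt$, I would derive
\[\sum_{x,y}w(x,y)|g(x)^2-g(y)^2|=4I,\qquad I:=\int_0^\infty t\,w(A_t,A_t^c)\,dt,\]
together with $\|g\|_\mu^2=\int_0^\infty 2t\,\mu(A_t)\,dt$ and $\sum_x\deg(x)g(x)^2=\int_0^\infty 2t\,\deg(A_t)\,dt$. Cauchy--Schwarz applied to the factorisation $|g(x)^2-g(y)^2|=|g(x)-g(y)|(g(x)+g(y))$, combined with $(a+b)^2\le 2(a^2+b^2)$, then yields
\[2I^2\le\mathcal{E}(g)\sum_x\deg(x)g(x)^2.\]
Since $A_t\subset\mathrm{supp}\,g$ has $\nu(A_t)\le\nu(V)/2$ for every $t>0$, the two Cheeger constants supply the twin lower bounds $I\ge\tfrac12 h(\mu,\nu)\|g\|_\mu^2$ and $I\ge\tfrac12 h(\deg,\nu)\sum_x\deg(x)g(x)^2$.

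The finishing bootstrap is what I expect to be the only non-obvious point. Instead of using the first twin bound directly in the Cauchy--Schwarz output (which would reproduce a squared constant), I would use the second bound to eliminate the degree norm, giving $I\le\mathcal{E}(g)/h(\deg,\nu)$, and only then combine with the first to conclude $\mathcal{E}(g)\ge\tfrac12 h(\mu,\nu)h(\deg,\nu)\|g\|_\mu^2$, so $\lambda_2\ge\mathcal{E}(g)/\|g\|_\mu^2\ge\tfrac12 h(\mu,\nu)h(\deg,\nu)$. The obstacle is therefore conceptual rather than computational: one must apply both Cheeger inequalities to the \emph{same} integral $I$, with one of them used to cancel the degree-weighted factor produced by Cauchy--Schwarz. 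The remaining technicalities -- the edge cases $\nu(V)=0$, $\|f_\pm\|_\mu=0$, or $I=0$ -- are dispatched using connectedness of $G$ together with the median property.
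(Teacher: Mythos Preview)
Your proposal is correct and follows essentially the same co-area/Cauchy--Schwarz strategy as the paper: both arguments reduce to a nonnegative test function whose superlevel sets are $\nu$-small, then extract the product $h(\mu,\nu)\cdot h(\deg,\nu)$ by pairing the boundary integral against the $\mu$-mass and the $\deg$-mass separately. The only noteworthy difference is in the reduction step: the paper works directly with $F^+$ where $F$ is the $\lambda_2$-eigenfunction (using the pointwise inequality $\Delta F^+\le\lambda_2 F$ on $\{F>0\}$ to get $\lambda_2\ge R(F^+)$, and replacing $F$ by $-F$ if necessary to ensure $\nu(\{F>0\})\le\nu(V)/2$), whereas you shift by a $\nu$-median and use the energy-splitting $\mathcal{E}(f_+)+\mathcal{E}(f_-)\le\mathcal{E}(f)$ together with $\|f-c\|_\mu^2\ge\|f\|_\mu^2$ to select $g$. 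Your ``bootstrap'' at the end is algebraically equivalent to the paper's manipulation, which simply writes the squared boundary integral over the product $\|F^+\|_\mu^2\cdot\|F^+\|_{\deg}^2$ and bounds each ratio separately.
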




\begin{remark} In the special case $\mu=\nu\ge{\rm{deg}}$, we have the following standard Cheeger estimate
\begin{equation} \label{eq:standcheegineq} 
\lambda_2(\Delta_{w,\mu}) \ge \frac{h_\mu(G)^2}{2} \end{equation}
with
$$ h_\mu(G) = h(\mu,\mu) = \inf_{\substack{\emptyset \neq A \subset V: \\ \mu(A) \le \mu(V)/2}}\frac{w(A,A^c)}{\mu(A)}. $$
Since $h(\mu,\nu) \le h(\deg,\nu)$, inequality \eqref{eq:standcheegineq} follows also from Theorem \ref{thm:gencheegineq}. The improvement of this generalization is illustrated in Example \ref{ex:cheegimprov} below.
The additional flexibility in Theorem \ref{thm:gencheegineq} allows also to concentrate the vertex measure $\mu$ increasingly on a subset $B \subset V$ which, in the limit, provides a connection to the Steklov operator, as we will discuss later.
\end{remark}

\begin{proof}
Let $\Delta = \Delta_{w,\mu}$.
Recall the following expression for the Rayleigh quotient,
$$ R(f) = \frac{\sum_{\{x,y\} \in E} w(x,y)|f(y)-f(x)|^2}{\sum_x \mu(x) |f(x)|^2} = \frac{\langle \Delta f, f\rangle_\mu}{\Vert f \Vert_\mu^2},
$$    
and the corresponding variational description of $\lambda_1$:
$$ \lambda_2(\Delta) = \min_{\substack{f \in C(V,\mathbb{R}):\\ \langle f, {\bf{1}} \rangle_\mu =0}} R(f). $$
Let $F \in C(V,\R)$ be a function satisfying
$$ \lambda_2(\Delta) = R(F). $$
Assume, without loss of generality, that we have
$$ \nu(\{x \in V: F(x)>0\}) \le \frac{1}{2} \nu(V). $$
%
Using
\begin{align*} 
\Delta F^+(x) &= \frac{1}{\mu(x)} \sum_{\{x,y\}\in E} w(x,y)(F^+(x)-F^+(y)) \\
&\le \frac{1}{\mu(x)} \sum_{\{x,y\}\in E} (F(x)-F(y)) = \lambda_2(\Delta)F(x)
\end{align*}
for all $x \in V^+ = \{x \in V: F(x) > 0 \}$, we obtain
\begin{align*} 
\lambda_2(\Delta) \Vert F^+ \Vert_\mu^2 &= \lambda_2(\Delta) \sum_{x \in V} \mu(x) (F^+(x))^2 \\
&\ge \sum_{x \in V^+} \mu(x) F^+(x) \Delta F^+(x) = \langle \Delta F^+,F^+ \rangle_\mu, 
\end{align*}
that is,
$$ \lambda_2(\Delta) \ge R(F^+). 
$$
We obtain, using Cauchy-Schwarz in $(*)$ below and
$$ \sqrt{a^2+b^2} \ge \frac{1}{\sqrt{2}}(|a|+|b|) $$
in $(**)$,
\begin{multline*}
R(F^+) = \frac{\langle \Delta F^+,F^+ \rangle_\mu}{\Vert F^+ \Vert^2_\mu} \cdot \frac{\Vert F^+ \Vert^2_{\rm{deg}}}{\Vert F^+ \Vert^2_{\rm{deg}}}
\\ = \frac{\sum_{\{x,y\} \in E} w(x,y)|F^+(y)-F^+(x)|^2 \cdot \sum_{\{x,y\} \in E} w(x,y) (|F^+(x)|^2+|F^+(y)|^2)}{\Vert F^+ \Vert^2_\mu \cdot \Vert F^+ \Vert^2_{\rm{deg}}} \\
\stackrel{(*)}{\ge} \frac{\left(\sum_{\{x,y\} \in E} w(x,y)|F^+(y)-F^+(x)|\sqrt{|F^+(x)|^2+|F+(y)|^2}\right)^2}{\Vert F^+ \Vert^2_\mu \cdot \Vert F^+ \Vert^2_{\rm{deg}}} \\
\stackrel{(**)}{\ge}
\frac{\left(\sum_{\{x,y\} \in E} w(x,y)|F^+(y)-F^+(x)|(F^+(x)+F^+(y))\right)^2}{2\Vert F^+ \Vert^2_\mu \cdot \Vert F^+ \Vert^2_{\rm{deg}}} 
\\ = \frac{\left(\sum_{\{x,y\} \in E} w(x,y)|F^+(y)^2-F^+(x)^2|\right)^2}{2\Vert F^+ \Vert^2_\mu \cdot \Vert F^+ \Vert^2_{\rm{deg}}}.
\end{multline*}
Introducing $H = (F^+)^2$ and $V(t) := \{x \in V: H(x) \ge t \}$ and using the co-area formulas, we have
$$
R(F^+) \ge \frac{1}{2} \frac{\left(\sum_{\{x,y\} \in E} w(x,y)|H(y)-H(x)|\right)^2}{\langle 1,H \rangle_\mu \cdot \langle 1,H \rangle_{\rm{deg}}} = \frac{1}{2} \frac{\left( \int_0^\infty w(V(t),V(t)^c) dt \right)^2}{\int_0^\infty \mu(V(t)) dt \cdot \int_0^\infty \deg(V(t)) dt}.
$$
Note that, for all $t > 0$, 
$$
\nu(V(t)) \le \nu\left(\{x \in V: F(x) > 0\}\right) \le \frac{1}{2}\nu(V). $$ 
Hence, we have 
\begin{align*}
w(V(t),V(t)^c) &\ge \mu(V(t)) h(\mu,\nu), \\ \text{and} \quad w(V(t),V(t)^c) &\ge {\rm{deg}}(V(t)) h({\rm{deg}},\nu)
\end{align*}
for $t > 0$, and therefore
\begin{align*}
\lambda_2(\Delta) \ge R(F^+) &\ge \frac{1}{2} \frac{ \int_0^\infty w(V(t),V(t)^c) dt}{\int_0^\infty \mu(V(t)) dt} \cdot \frac{\int_0^\infty w(V(t),V(t)^c) dt}{\int_0^\infty \deg(V(t)) dt} \\ &\ge \frac{h(\mu,\nu)h({\rm{deg},\nu)}}{2}.
\end{align*}
\end{proof}

\begin{example} \label{ex:cheegimprov} Let $G = (V,E)$ be a path of length $2$, that is, $V=\{x,y,z\}$ and $E=\{ \{x,y\},\{y,z\}\}$. Let $w \equiv 1$ and $\mu \in \mathcal{M}^*(V)$ be given by
$$ \mu(x) = \mu(z) = \frac{1}{\epsilon}  \quad \text{and} \quad \mu(y) = 2, $$
for $\epsilon \in (0,1)$. It is easy to verify that
$$ h_\mu(G) = \frac{w(\{x\},\{y,z\})}{\mu(x)} = \epsilon, $$
and the standard Cheeger inequality implies
$$ \lambda_2(\Delta_{w,\mu}) \ge \frac{\epsilon^2}{2}. $$
The eigenvalues of $\Delta_\mu$ are $0,\epsilon,1+\epsilon$, and therefore,
$$ \lambda_2(\Delta_{w,\mu}) = \epsilon, $$
with the corresponding eigenfunction $f(x)=-f(z)=1$ and $f(y)=0$.
Choosing $\nu=\mu$, we obtain
\begin{align*}
h(\mu,\nu) = h_\mu(G) &= \epsilon, \\
h({\deg},\nu) &= 
\frac{w(\{x\},\{y,z\})}{\deg(x)} = 1,
\end{align*}
and our generalized Cheeger inequality yields the improved inequality
$$ \lambda_2(\Delta_{w,\mu}) \ge \frac{\epsilon}{2}, $$
which is of the right order in $\epsilon$.
\end{example}

\subsection{Jammes Cheeger inequalities for the Steklov operator}
\label{sec:jammes}

Using Hassannezhad and Miclo's limiting argument and the result in the previous subsection, we prove Jammes Cheeger inequalities for the Steklov operator, including the inequalities for the Escobar Cheeger constant.

\begin{proof}[Proof of Theorem~\ref{t:Jammes}]
    We consider the Laplacian $\Delta_r = (1_B+r1_{B^c})\Delta$ for $r>0,$ with the invariant measure $\mu_r:=(1_B+\frac1r1_{B^c})\mu.$ For the lower bound, we use the generalized Cheeger estimate, Theorem~\ref{t:gcheeger}, 
   $$\lambda_2(r) \ge \frac{1}{2} \cdot h(\mu_r,\nu) \cdot h({\rm{deg}},\nu).$$ By passing to the limit, $r\to\infty,$ we have the convergence result of Hassannezhad and Miclo, Theorem~\ref{t:miclo}, 
   $$\lim_{r\to \infty}\lambda_2(r)= \sigma_2.$$ 
   We observe that
$$\frac{1}{h(\mu_r,\nu)}=\max_{A\subset V}1_{\nu (A)\leq \frac12 \nu(V)}\frac{\mu_r(A)}{w(A,A^c)}\to \frac{1}{h(\mu_B,\nu)},\quad r\to\infty.$$
   Therefore, the Cheeger constants are convergent. This implies the lower bound
   $$\sigma_2 \ge \frac{1}{2} \cdot h(\mu_B,\nu) \cdot h({\rm{deg}},\nu).$$

For the upper bound,
$2h(\mu_B,\mu_B)\ge\sigma_2,$ we use the Rayleigh quotient characterization of $\sigma_2,$ i.e., 
$$\sigma_2=\inf_{f\in C(V,\R)}\frac{\langle \Delta f, f\rangle_\mu}{\inf_{c\in \R}\Vert f-c \Vert_{\mu_B}^2}.$$
For a minimizer $A$ of $h(\mu_B,\mu_B),$ we set $f=1_A.$ 
Note that $\langle \Delta f, f\rangle_\mu=w(A,A^c).$ Moreover, for all $c\in \R,$ using the fact that $\mu(B)\ge 2\mu_B(A),$
\begin{align*}
    \Vert f-c \Vert_{\mu_B}^2&=\| f\|_{\mu_B}^2-2\langle f,c\rangle_{\mu_B}+c^2\mu(B) \\
    &= \mu_B(A)- 2c\mu_B(A) + c^2\mu(B)\\
    &\ge \mu_B(A)(1-2c+2c^2) \ge \frac{1}{2}\mu_B(A).
\end{align*} This yields the upper bound. Thus, the proof is finished. 
\end{proof}

Next, we will construct some examples to show the sharpness of our Jammes Cheeger estimates for the Steklov eigenvalues.
\begin{example} \label{ex:12} Let $G = (V,E)$ be a path of length $2$, that is, $V=\{x,y,z\}$ and $E=\{ \{x,y\},\{y,z\}\}$. Let $B=\{x,z\}$, $w \equiv 1$ and $\mu \in \mathcal{M}^*(V)$ be given by
$$ \mu(x) = \epsilon  \quad \text{and} \quad \mu(y)=\mu(z) = 1, $$
for $\epsilon \in (0,1)$. 
One easily shows by Corollary \ref{cor:stekrefftwoverts} that  $\sigma_2=\frac12(1+\frac{1}{\epsilon}).$ Moreover, we have
$$h(\mu_B,\mu_B)=\frac{1}{\epsilon},\quad h(\deg,\mu_B)=\frac{1}{3}.$$ Hence, our estimate is sharp in the order of $\frac{1}{\epsilon}.$

\end{example}

\begin{example} \label{ex:13}
For $n\geq 2,$ let $G=(V,E)$ be a graph with vertex set $$V=\{x_1,\cdots,x_{2n+1},y_1,\cdots, y_n, z_1,z_2\}$$ and $E=\{\{x_i,x_{i+1}\},1\leq i\leq 2n, \{y_j,z_k\}, 1\leq j\leq n, k=1,2, \{x_{2n+1},z_1\}\}.$ See Figure \ref{fig:ex13} for an illustration. Let $B=\{x_1,z_2\}$, $w \equiv 1$ and $\mu=\deg.$ One can verify that the effective resistance  between $x_1$ and $z_2$ is $2n+1+\frac2n$ , i.e., 
$$ \Reff(\{x_1\},\{z_2\}) = 
2n+1+\frac2n. $$
(See \cite{ds84,bar17} for more details.) By Corollary \ref{cor:stekrefftwoverts} (or also \cite[Prop. 2.2]{HHW2017}), 
we obtain, $$\sigma_2=\frac{1}{2n+1+\frac2n}\left(1+\frac1n\right)\sim \frac{1}{2n},\quad n\to\infty.$$ Moreover, we have $$h(\deg_B,\deg_B)=1,\quad h(\deg_B,\deg)=\frac1n,\quad h(\deg,\deg_B)=h(\deg,\deg)=\frac{1}{4n+1}.$$
(Note that the optimal sets attaining the respective Cheeger constants are $A =\{x_1,\dots,x_{2n+1}\}$, $V \setminus A$, $A$, $A$.)
Hence, our lower bound estimate in \eqref{eq:21} is sharp
in the order of $\frac1n,$ which is better than \eqref{eq:oldJammes}.
\end{example}

\begin{figure}
\begin{center}
\begin{tikzpicture}[thick,scale=1]%
\filldraw [black] (0,5) circle (2pt);
\filldraw [black] (1,5) circle (2pt);
\filldraw [black] (3,5) circle (2pt);
\filldraw [black] (4,5) circle (2pt);
\filldraw [black] (5,5) circle (2pt);
\filldraw [black] (5,5) circle (2pt);
\filldraw [black] (7,6.5) circle (2pt);
\filldraw [black] (7,5.5) circle (2pt);
\filldraw [black] (7,3.5) circle (2pt);
\filldraw [black] (9,5) circle (2pt);

\draw (0,5) node[label=above:$x_1$]{};
\draw (1,5) node[label=above:$x_2$]{};
\draw (3,5) node[label=above:$x_{2n}$]{};
\draw (4,5) node[label=above:$x_{2n+1}$]{};
\draw (5,5) node[label=above:$z_{1}$]{};
\draw (7,6.5) node[label=above:$y_{1}$]{};
\draw (7,5.5) node[label=above:$y_{2}$]{};
\draw (7,3.5) node[label=above:$y_{n}$]{};
\draw (9,5) node[label=above:$z_{2}$]{};

\draw (0,5) -- (1,5);
\draw (1,5) -- (1.5,5);
\draw[loosely dotted] (1.5,5) -- (2.5, 5);
\draw (2.5,5) -- (3,5);
\draw (3,5) -- (4,5);
\draw (4,5) -- (5,5);
\draw (5,5) -- (7,6.5);
\draw (5,5) -- (7,5.5);
\draw (5,5) -- (7,3.5);
\draw[loosely dotted] (7,5) -- (7,4.3);
\draw (7,6.5) -- (9,5);
\draw (7,5.5) -- (9,5);
\draw (7,3.5) -- (9,5);
\end{tikzpicture}
\end{center}
\caption{An illustration of the graph in Example \ref{ex:13}. \label{fig:ex13}}
\end{figure}
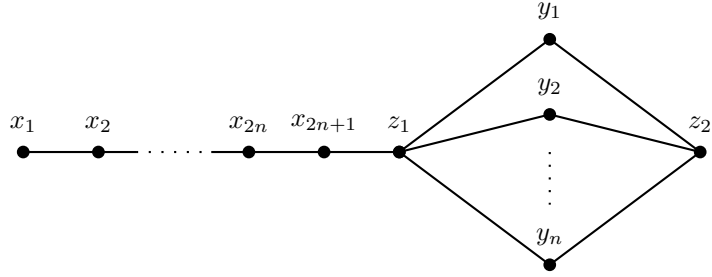

\section{Non-symmetric Steklov operators}
For the non-reversible Markov chain, it is not obvious how to define the Steklov operator due to the lack of Rayleigh quotient characterization. We overcome this difficulty using the maximum principle. Moreover, the maximum principle is the key ingredient for our proof of a Hassannezhad and Milco type convergence result.

\subsection{Maximum Principle and harmonic extensions}
\label{sec:maxprinc}


The results in this section hold for all not necessarily reversible finite Markov chains $G=(V,p,\mu)$, unless stated otherwise. An essential tool to prove the uniqueness of the Dirichlet Problem, formulated in Theorem \ref{thm:Dirich-Prob}, is the Maximum Principle. We will need the following complex-valued local version.

\begin{lemma}[Local Maximum Principle]  \label{lem:neighbourharm}
    Let $f \in C(V,\C)$.
    If $x \in V$ satisfies $|f(x)| = \Vert f \Vert_\infty$ and
    \begin{equation} \label{eq:lapvanish} 
    \Re \left( \overline{f(x)} \, \Delta f(x) \right) \le 0, 
    \end{equation}
    then we have
    $$ |f(y)| = |f(x)| \quad \textrm{for every $y \in V$ with $(x,y) \in E^{or}(G)$.} $$
\end{lemma}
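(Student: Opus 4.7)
The plan is to expand the expression $\overline{f(x)}\,\Delta f(x)$ using the definition of the Markov chain Laplacian and then show that, under the maximality hypothesis $|f(x)|=\|f\|_\infty$, every summand of its real part is non-negative. Then the assumed inequality $\Re(\overline{f(x)}\,\Delta f(x))\le 0$ will force each summand with $p_{xy}>0$ to vanish, which in turn forces $|f(y)|=|f(x)|$.

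First I would write
\[
\overline{f(x)}\,\Delta f(x) \;=\; \sum_{y\in V} p_{xy}\bigl(|f(x)|^2 - \overline{f(x)}\,f(y)\bigr),
\]
and take real parts to obtain
\[
\Re\!\bigl(\overline{f(x)}\,\Delta f(x)\bigr) \;=\; \sum_{y\in V} p_{xy}\bigl(|f(x)|^2 - \Re(\overline{f(x)}\,f(y))\bigr).
\]
The key observation is that for each $y$,
\[
\Re(\overline{f(x)}\,f(y)) \;\le\; |\overline{f(x)}\,f(y)| \;=\; |f(x)|\,|f(y)| \;\le\; |f(x)|^2,
\]
where the last inequality uses $|f(y)|\le \|f\|_\infty = |f(x)|$. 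Consequently each summand $p_{xy}\bigl(|f(x)|^2 - \Re(\overline{f(x)}\,f(y))\bigr)$ is non-negative.

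Combined with the hypothesis that the total sum is non-positive, each non-negative summand must vanish, so for every $y$ with $p_{xy}>0$ (equivalently, $(x,y)\in E^{or}(G)$) we have $\Re(\overline{f(x)}\,f(y)) = |f(x)|^2$. Tracing back through the chain of inequalities above, equality forces both $\Re(\overline{f(x)}\,f(y))=|\overline{f(x)}\,f(y)|$ (so $\overline{f(x)}\,f(y)$ is a non-negative real) and $|f(x)|\,|f(y)|=|f(x)|^2$. If $|f(x)|=0$ then $\|f\|_\infty=0$ and the conclusion is trivial; otherwise we may divide to conclude $|f(y)|=|f(x)|$, as required.

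There is no real obstacle here: the argument is essentially the standard pointwise maximum-principle computation, but done for complex-valued $f$ by rotating through the phase factor $\overline{f(x)}$ before taking real parts. The only subtle point worth stating carefully in the write-up is the two-step equality discussion that extracts $|f(y)|=|f(x)|$ (rather than merely bounding $|f(y)|$) from the vanishing of the $y$-summand, together with handling the trivial case $f(x)=0$ separately.
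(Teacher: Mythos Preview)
Your proof is correct and follows essentially the same approach as the paper: expand $\Re(\overline{f(x)}\,\Delta f(x))$, observe that each summand $p_{xy}(|f(x)|^2-\Re(\overline{f(x)}f(y)))$ is non-negative by maximality, and conclude from the vanishing of each term with $p_{xy}>0$ that $|f(y)|=|f(x)|$. Your write-up is in fact more explicit than the paper's about the equality-case analysis and the trivial case $f(x)=0$, which the paper leaves implicit.
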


\begin{proof}
    It follows from condition \eqref{eq:lapvanish} that
    $$ 0 \ge {\rm{Re}} \left( \overline{f(x)} \, \Delta f(x) \right) = \sum_{y} p_{xy} \left( \underbrace{| f(x) |^2 - {\rm{Re}} (\overline{f(x)} f(y))}_{\ge 0} \right) \ge 0. $$
    Since $y \in V$ is neighbour of $x$ iff $p_{xy} > 0$, we conclude from this that $\vert f(y) \vert = \vert f(x) \vert$ for every neighbour $y$ of $x$. 
\end{proof}

With this tool at hand, we can show that the Dirichlet Problem has a unique solution.

\begin{proof}[Proof of Theorem \ref{thm:Dirich-Prob}]

Uniqueness of the harmonic extension $F \in C(V,\mathbb{C})$ follows from the Maximum Principle: Assuming $F_1, F_2 \in C(V,\mathbb{C})$ are two solutions of the Dirichlet Problem, then $F = F_1 - F_2$ satisfies
\begin{align*}
    \Delta F(x) &= 0 \quad \textrm{for all $x \in B^c= V \setminus B$,} \\
    F(x) &= 0 \quad \textrm{for all $x \in B$.}
\end{align*}
Let $x \in V$ be a vertex with $|F(x)| = \Vert F \Vert_\infty$. If $x \in B$, we are done. If $x \not\in B$, there exists a directed path from $x$ to some vertex $w \in B$, by assumption, and harmonicity of $F$ on $B^c$ and Lemma \ref{lem:neighbourharm} implies that $|F(x)| = |F(w)| = 0$. This completes the uniqueness proof.

To show existence of an extension, we introduce the operator $\tilde{\Delta}_B \,:\, C(V,\mathbb{C}) \to C(V,\mathbb{C})$ defined as:
$$ \tilde{\Delta}_B f(x) = \begin{cases} \Delta f(x) & \textrm{if $x \in B^c$,} \\
f(x) & \textrm{if $x \in B$.} \end{cases}$$
Note that if $\tilde{\Delta}_B f=0$, then $f=0$. It means $\tilde{\Delta}_B$ is injective and, therefore, surjective. One can check that, given $f\in C(B,\mathbb{C})$, a solution $F \in C(V,\mathbb{C})$
satisfying
\begin{align*}
    \Delta F(x) &= 0 \quad \textrm{for all $x \in B^c$,} \\
    F(x) &= f(x) \quad \textrm{for all $x \in B$,}
\end{align*} 
can be given by $F=\left(\tilde{\Delta}_B\right)^{-1} f_B$, where $f_B \in C(V,\mathbb{C})$ is the extension of $f$ by zero. 
\end{proof}

\begin{remark} \label{rem:locglobMP}
A very similar proof to Lemma \ref{lem:neighbourharm} gives the following local Maximum Principle for real-valued functions:
Let $f \in C(V,\R)$. If $x \in V$ satisfies $f(x) = \Vert f\Vert_\infty$ and
$$ \Delta f(x) \le 0, $$
then we have
$$ f(y) = f(x) \quad \textrm{for every $y \in V$ with $(x,y) \in E^{or}(G)$.} $$
This result implies the following global Maximum Principle for finite Markov chains with boundary $B \subset V$, by using the same arguments as for the uniqueness proof in the Dirichlet Problem: Let $V$ be connected to $B$. If a real-valued function $f \in C(V,\R)$ is harmonic on $B^c$, then it assumes both maximum and minimum at vertices in $B$.
\end{remark}

Assume now that $B \subset V$ is a non-empty subset of boundary vertices and that $V$ is connected to $B$. Recall from the Introduction that we denote the harmonic extension of a function $f \in C(B,\C)$ by ${\rm{Ext}}(f)$. For any $y \in V$, let $\mu_y: B \to \R$ be defined by
$$ \mu_y(x) = {\rm{Ext}}(1_x)(y). $$
It follows from the global Maximum Principle in Remark \ref{rem:locglobMP} that $\mu_y(x) \in [0,1]$ and, by linearity, that the harmonic extension $F \in C(V,\C)$ of $f \in C(B,\C)$ satisfies
$$ F(y) = \mu_y(f) = \sum_{z \in B} f(z) \mu_y(z). $$
Moreover, we have $\mu_y(B) =1$ for all $y \in V$, since $1_V$ is the harmonic extension of $1_B$. Therefore, the family $\{ \mu_y \}_{y \in V}$ lies in $\mathcal{P}(B)$ and is called the set of \emph{harmonic measures} associated to the Laplacian $\Delta$. Probabilistically, $\mu_x(B_0)$ for a subset $B_0 \subset B$, is the probability that the random walk with the transition rates $p_{yz}$, starting at $x$, will hit the boundary $B$ in the set $B_0$ before it hits $B \setminus B_0$. The Steklov operator of a function $f \in C(B,\C)$ at $x \in B$ is then given by
\begin{align*} 
Tf(x) &= \sum_{y \in V} p_{xy} (f(x) - \mu_y(f)) \\
&= \sum_{y \in V} p_{xy} \left( \sum_{z \in B} \mu_y(z) f(x) - \sum_{z \in B} \mu_y(z)f(z) \right) \\
&= \sum_{z \in B}
\underbrace{\left( \sum_{y \in V} \mu_y(z) p_{xy} \right)}_{= \tilde p_{xz}} (f(x) - f(z)).
\end{align*}
We see that
$T$ can also be viewed as a Laplacian on the finite Markov chain $(B,\tilde p,\tilde \mu)$ with $\tilde \mu=\mu_B/\mu(B))$. Note that $\mu_B$ is an invariant measure for the transition rates $\tilde p_{xy}$ since, for any $f \in C(B,\C)$ with harmonic extension $F \in C(V,\C)$, we have
$$ 0 = \sum_{x \in V} \Delta F(x) \mu(x) = \sum_{x \in B} \Delta F(x) \mu_B(x) = \sum_{x \in B} T f(x) \mu_B(x). $$

Of course, the Steklov operator satisfies also the global Maximum Principle mentioned in Remark \ref{rem:locglobMP}.

Let us briefly consider the case when $G=(V,p,\mu)$ is reversible. Let $f,g \in C(B,\R)$ and $F,G \in C(V,\R)$ be their harmonic extensions. Then the Laplacian on $C(V,\R)$ is symmetric and we have
\begin{align*}
\langle Tf,g \rangle_{\tilde \mu} &=\sum_{x \in B} \tilde \mu(x) \Delta F(x) g(x) = \frac{1}{\mu(B)} \langle \Delta F, G \rangle_{\mu} \\
&= \frac{1}{\mu(B)} \langle F, \Delta G \rangle_{\mu} = \sum_{x \in B} \tilde \mu(x) f(x) \Delta G(x) \\
&= \langle f,Tg \rangle_{\tilde \mu},
\end{align*}
which shows that the Steklov operator $T $ on $C(B,\R)$ is also symmetric with respect to the inner product $\langle \cdot,\cdot \rangle_{\tilde \mu}$. Moreover, the finite Markov chain $(B,\tilde p,\tilde \mu)$ is reversible, since its Laplacian $T$ is symmetric. 

In fact, these considerations agree with \cite[Proposition 2.2]{HHW2017} or \cite[Proposition 2]{hassannezhad-miclo20}, stating that the Steklov operator $T$ can be viewed as a symmetric graph Laplacian on $C(B,\R)$ with suitable transition states in the reversible case.

\subsection{Steklov eigenvalues as limit of Laplacian eigenvalues} \label{sec:stekconv}

In the following two subsections, we  are still in the context of non-reversible continuous time Markov chains. We retain the assumption that every vertex in $V$ is connected to some vertex in $B$ via a directed path and refer to this henceforth simply by saying that ``$V$ is connected to $B$''. Our aim is to prove a certain limit behaviour of the eigenvalues of operators $\Delta_r := (1_B + r1_{B^c} )\Delta$ as $ r \to \infty$, which can be viewed as speeding up the Markov chain on the interior vertices $B^c = V \setminus B$. Some of these eigenvalues have the property that their real parts escape to infinity, while the other $b=|B|$ eigenvalues converge to the eigenvalues of the Steklov operator $T$. In fact, we will prove Theorem \ref{thm:convlapsteklov} by establishing Theorem \ref{thm:convInfinity} and Theorem \ref{thm:convFinite} below. 

\subsection{The escaping eigenvalues}

In this subsection, we write, without loss of generality, $V=\{1,2,\dots,n\}$ and $B = \{1,2,\dots,b\}$. Any function $f\in C(V,\mathbb{C}),$ can be identified with the column vector $(f(1),f(2),\cdots, f(n))^\top$. The Laplacian $\Delta$ can be written as the block matrix
\[
  \Delta = 
  \begin{pmatrix}
    L_{11} & L_{12}\\
    L_{21} & L_{22}
  \end{pmatrix},
\] where $L_{11}$ and $L_{22}$ are $b\times b$ and $(n-b)\times(n-b)$ matrices respectively.
Note that $L_{22}$ corresponds to the Laplacian on $C(B^c,\C)$ with Dirichlet boundary condition on $B$. 

For $r>0,$ let $D_r:=\mathrm{diag}(\underbrace{1,\cdots, 1}_b,\underbrace{r,\cdots, r}_{n-b})$. The matrix representation of the rescaled Laplacian is given by
$$\Delta_r = D_r \Delta.$$ 

Let $\{\lambda_1^r,\cdots, \lambda_{n}^r\}$ be the set of eigenvalues of $\Delta_r$, depending continuously on $r$ (see \cite[Theorems II.5.1 and II.5.2]{kato95}).

\begin{proposition}\label{prop:diricheigenvalue}
    Assume that $V$ is connected to $B$, then every eigenvalue of $L_{22}$ has positive real part.
\end{proposition}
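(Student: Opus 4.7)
The plan is to leverage the complex Local Maximum Principle (Lemma \ref{lem:neighbourharm}) via an extension-by-zero argument, exactly in the same spirit as the uniqueness part of the Dirichlet Problem. Suppose $\lambda \in \C$ is an eigenvalue of $L_{22}$ with eigenvector $g \in C(B^c, \C) \setminus \{0\}$, and let $\tilde g \in C(V,\C)$ be its extension by zero on $B$. The crucial observation is that, because of the block form of $\Delta$ and the fact that $\tilde g$ vanishes on $B$, we have
$$ \Delta \tilde g(x) = L_{22} g(x) = \lambda g(x) \quad \text{for every } x \in B^c. $$

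Arguing by contradiction, assume $\Re(\lambda) \le 0$. Since $\tilde g$ vanishes on $B$ but is nontrivial, the maximum of $|\tilde g|$ on $V$ is attained at some $x_0 \in B^c$ and is strictly positive. At this $x_0$ we have
$$ \Re\bigl(\overline{\tilde g(x_0)}\,\Delta \tilde g(x_0)\bigr) = \Re(\lambda)\,|g(x_0)|^2 \le 0, $$
so Lemma \ref{lem:neighbourharm} applies and yields $|\tilde g(y)| = |\tilde g(x_0)|$ for every out-neighbour $y$ of $x_0$ in $E^{or}(G)$. The key step is to iterate this along a directed path to $B$. By the connectedness assumption, choose a directed path $x_0 = v_0 \to v_1 \to \cdots \to v_k$ with $v_0,\dots,v_{k-1}\in B^c$ and $v_k\in B$ (the first hitting of $B$). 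By induction on $i \le k-1$, each $v_i \in B^c$ again satisfies $|\tilde g(v_i)|=\|\tilde g\|_\infty$, so the hypotheses of Lemma \ref{lem:neighbourharm} hold at $v_i$ and propagate the equality to $v_{i+1}$. At the end we conclude $|\tilde g(v_k)| = \|\tilde g\|_\infty > 0$, contradicting $\tilde g|_B \equiv 0$.

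Hence no eigenvalue of $L_{22}$ can have non-positive real part, proving the proposition. The only mildly delicate point is bookkeeping the iteration: one must ensure that each intermediate vertex $v_i$ on the chosen path lies in $B^c$ so that Lemma \ref{lem:neighbourharm}'s sign hypothesis can be re-verified there, which is why we take the path up to the first time it enters $B$. No analytic or spectral machinery beyond the local maximum principle is needed.
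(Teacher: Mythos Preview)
Your proof is correct, but it takes a genuinely different route from the paper's. The paper first invokes the Gershgorin Circle Theorem: since the diagonal entries $l_{ii}$ of $L_{22}$ are positive and dominate the off-diagonal row sums, every eigenvalue lies in a closed disk in the right half-plane that touches the imaginary axis only at the origin. This reduces the claim to showing $0$ is not an eigenvalue of $L_{22}$, which the paper then deduces from the already-proved uniqueness in the Dirichlet Problem (Theorem~\ref{thm:Dirich-Prob}). Your argument bypasses Gershgorin entirely and handles all $\Re(\lambda)\le 0$ in one stroke by applying Lemma~\ref{lem:neighbourharm} directly to the zero-extension of an eigenvector; in effect you re-run the Dirichlet-uniqueness argument with the extra twist that the sign hypothesis of the lemma is satisfied because $\Re(\lambda)\le 0$. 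The paper's approach is shorter by outsourcing the geometric localisation to Gershgorin and reusing a result already on the shelf; your approach is more self-contained and shows that the maximum principle alone carries the full weight of the proposition.
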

\begin{proof}
    Let $L_{22}=\{l_{ij}\}_{(n-b)\times (n-b)}.$ Note that 
    $$l_{ii} 
    \geq \sum_{j \in [n-b] \setminus \{i\}}| l_{ij} |=:R_i,\quad 1\leq i\leq n-b.$$
    By the Gershgorin Circle Theorem (see, e.g., \cite[Theorem 6.1.1]{horn-johnson13}), the eigenvalues of $L_{22}$ are in the union of Gershgorin disks $B_{R_i}(l_{ii})$, which implies that all eigenvalues of $L_{22}$ have non-negative real parts. Moreover, every eigenvalue with vanishing real part must be zero.
    Since $V$ is connected to $B$, Theorem \ref{thm:Dirich-Prob} implies that 
all eigenvalues of $L_{22}$ are nonzero.     
This proves the proposition.
\end{proof}

\begin{theorem}\label{thm:convInfinity}
Assume that $V$ is connected $B$ and let $\lambda_i^r$ be the continuous eigenvalues of $\Delta_r$. Let $\epsilon > 0$ be the minimum of the real parts of the eigenvalues of $L_{22}$. 
Then there exists pairwise distinct $i_1,\dots,i_{n-b} \in [n]$ such that
\begin{equation} \label{eq:liminflambdaijr} 
\liminf_{r \to \infty} \frac{1}{r} \mathrm{Re} (\lambda_{i_j}^r) \ge \epsilon \quad \text{for all $j \in [n-b]$.} 
\end{equation}
In particular, the real parts of these eigenvalues tend to infinity, as $r \to \infty$.
\end{theorem}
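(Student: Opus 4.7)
The plan is to rescale by setting $s = 1/r$ and to study the matrix-valued function
\[
M(s) := \tfrac{1}{r}\Delta_r = \begin{pmatrix} s L_{11} & s L_{12} \\ L_{21} & L_{22} \end{pmatrix},
\]
which is polynomial, hence analytic, in $s$ and extends to $s = 0$. The limit $M(0)$ is block lower-triangular, so
\[
\det(\lambda I - M(0)) = \lambda^b \det(\lambda I - L_{22}),
\]
meaning the eigenvalues of $M(0)$ are $0$ with multiplicity $b$, together with the $n-b$ eigenvalues of $L_{22}$ (with multiplicity). By Proposition \ref{prop:diricheigenvalue}, every eigenvalue of $L_{22}$ has real part at least $\epsilon > 0$.

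Next I would invoke the same continuity result of Kato used to define $\lambda_i^r$ to select continuous eigenvalue branches $\mu_1(s),\dots,\mu_n(s)$ of $M(s)$ on $s \in [0, \infty)$. After relabeling, I may assume $\mu_1(0) = \cdots = \mu_b(0) = 0$, while $\mu_{b+1}(0), \dots, \mu_n(0)$ enumerate the eigenvalues of $L_{22}$ with multiplicity. By continuity, for each $j \in \{b+1,\dots,n\}$,
\[
\lim_{s \to 0^+} \mathrm{Re}(\mu_j(s)) = \mathrm{Re}(\mu_j(0)) \ge \epsilon.
\]

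To transfer this back to $\Delta_r$, I would use $\Delta_r = r M(1/r)$, so that $r \mapsto r\mu_i(1/r)$ is a continuous eigenvalue branch of $\Delta_r$ on $r > 0$. For $j \in \{b+1,\dots,n\}$,
\[
\tfrac{1}{r}\mathrm{Re}\bigl(r\mu_j(1/r)\bigr) = \mathrm{Re}(\mu_j(1/r)) \xrightarrow[r \to \infty]{} \mathrm{Re}(\mu_j(0)) \ge \epsilon,
\]
while for $j \in \{1,\dots,b\}$ we have $|r\mu_j(1/r)|/r = |\mu_j(1/r)| \to 0$. Hence, for all sufficiently large $r$, the $n$ eigenvalue branches of $\Delta_r$ split into the $n-b$ branches satisfying $\tfrac{1}{r}\mathrm{Re} \ge \epsilon/2$ and the $b$ branches satisfying $\tfrac{1}{r}|\cdot| \le \epsilon/2$. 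These two groups are disjoint, and matching this structure with the given continuous labeling $\lambda_i^r$ produces the desired pairwise distinct indices $i_1,\dots,i_{n-b}$.

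The main obstacle is the matching in the last step: Kato's theorem provides existence of continuous branches but they are not canonical, so one must check that the partition of $[n]$ into ``large'' and ``small'' branches provided by $\{\mu_i\}$ transfers to the same partition for the family $\{\lambda_i^r\}$. This reduces to the observation that any two continuous parametrizations of the same multiset of eigenvalues on a connected interval coincide up to a continuous relabeling, and the well-separatedness of the two groups for large $r$ forbids any such relabeling from mixing them; thus the ``large'' subset of indices is intrinsically well-defined, completing the proof.
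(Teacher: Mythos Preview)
Your proof is correct and follows essentially the same route as the paper: both rescale $\Delta_r$ by $1/r$, pass to the limit $r\to\infty$, identify the limiting spectrum as $\{0\}^b\cup\operatorname{spec}(L_{22})$, and invoke Kato's continuity theorems. The only cosmetic difference is that the paper first conjugates by $D_r^{1/2}$ to work with $H_r=\begin{pmatrix}\tfrac1r L_{11}&\tfrac1{\sqrt r}L_{12}\\ \tfrac1{\sqrt r}L_{21}&L_{22}\end{pmatrix}\to\begin{pmatrix}0&0\\0&L_{22}\end{pmatrix}$, whereas you take $M(s)=\tfrac1r\Delta_r$ directly and get a block \emph{lower}-triangular limit; your choice is arguably cleaner since $M(s)$ is polynomial in $s=1/r$, and you spell out the branch-matching argument that the paper leaves implicit.
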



\begin{proof}
Since
$$\Delta_r=D_r^{\frac12}D_r^{\frac12}\Delta D_r^{\frac12}D_r^{-\frac12},$$ 
$\Delta_r$ is similar to the matrix 
$D_r^{\frac12}\Delta D_r^{\frac12}.$ Note that
$D_r^{\frac12} \Delta D_r^{\frac12}=r H_r$ for $$H_r = 
  \begin{pmatrix}
    \frac1r L_{11} & \frac{1}{\sqrt{r}}L_{12}\\
   \frac{1}{\sqrt{r}} L_{21} &  L_{22}
  \end{pmatrix}.
$$ As $r\to \infty,$  $$H_r \to 
  \begin{pmatrix}
   0 & 0\\
   0 &  L_{22}
  \end{pmatrix} = H_\infty.
$$ 
Note that the eigenvalues of the limit matrix $H_\infty$ are the eigenvalues of $L_{22}$ together with the eigenvalue $0$ of multiplicity $b$. Recall that the minimum $\epsilon$ of the real parts of the eigenvalues of $L_{22}$ is strictly positive by Proposition \ref{prop:diricheigenvalue}.
Again, by \cite[Theorems II.5.1 and II.5.2]{kato95}, precisely $b$ of the  continuous functions $f_i(r)=\frac{1}{r} \Re(\lambda_i^r)$ 
converge to $0$, while the other functions, indexed by $i_1,\dots,i_{n-b}$, satisfy \eqref{eq:liminflambdaijr}. This finishes the proof.
\end{proof}

\subsection{The convergent eigenvalues}

We start with the following heat semigroup contraction property.

\begin{lemma}\label{lem:semigroupSubCommute2}
  Let $q: V \to \R$. Then we have for all $f \in C(V,\C)$:
  \[
  |e^{-t(\Delta+q)}f| \leq e^{-t(\Delta+q)}|f|.
  \]
\end{lemma}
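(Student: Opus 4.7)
The plan is to exploit the Lie--Trotter product formula in finite dimensions to split the exponential of $\Delta+q$ into two ingredients that each satisfy the desired domination for elementary reasons.

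First, I would verify the bound separately for $e^{-s\Delta}$ and for $e^{-sq}$. Since $-\Delta$ has nonnegative off-diagonal entries $p_{xy}$ and zero row sums, it is a $Q$-matrix (the generator of a continuous-time Markov chain on $V$), so $e^{-s\Delta}$ is a stochastic matrix whose kernel $P_s(x,y)$ is nonnegative with $\sum_y P_s(x,y)=1$. The triangle inequality then gives, pointwise in $x$,
\[
\bigl|e^{-s\Delta} h(x)\bigr| = \Bigl| \sum_{y \in V} P_s(x,y) h(y) \Bigr| \le \sum_{y \in V} P_s(x,y) |h(y)| = e^{-s\Delta}|h|(x)
\]
for every $h \in C(V,\C)$. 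Since $q$ is real-valued, the operator $e^{-sq}$ is multiplication by the \emph{positive} function $x \mapsto e^{-sq(x)}$, so $|e^{-sq} h| = e^{-sq}|h|$ pointwise.

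Next, I would combine these two facts using the Trotter product formula, which holds in finite dimensions for arbitrary matrices,
\[
e^{-t(\Delta+q)} = \lim_{n \to \infty} \bigl(e^{-t\Delta/n}\, e^{-tq/n}\bigr)^n.
\]
For each fixed $n$, iterating the two pointwise inequalities above a total of $n$ times yields, by a straightforward induction on the number of applied factors,
\[
\bigl| (e^{-t\Delta/n}\, e^{-tq/n})^n f \bigr| \le (e^{-t\Delta/n}\, e^{-tq/n})^n |f|.
\]
Letting $n \to \infty$ (the convergence being uniform in the matrix norm on the finite-dimensional space $C(V,\C)$) gives the claimed inequality $|e^{-t(\Delta+q)} f| \le e^{-t(\Delta+q)}|f|$.

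The only points requiring any care are invoking the Lie--Trotter formula and the stochasticity of $e^{-s\Delta}$, both of which are standard in finite dimensions, so I do not anticipate any serious obstacle. As an alternative route, one can instead argue directly via Kato's inequality $\Re\bigl(\overline{u(x)} \Delta u(x)\bigr) \ge |u(x)| \Delta |u|(x)$: setting $u(t)=e^{-t(\Delta+q)}f$ and $v(t)=e^{-t(\Delta+q)}|f|$, Kato yields $\partial_t |u| + (\Delta+q)|u| \le 0$ at points where $|u|>0$, and a semigroup maximum principle applied to $v-|u|$ (after shifting $q$ to be nonnegative, which is harmless on a finite set) completes the argument.
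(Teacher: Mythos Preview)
Your Trotter-splitting argument is correct. The paper proceeds differently: it sets $u_t = e^{-t(\Delta+q)}f$, computes
\[
\partial_t |u_t|^2(x) = -2\,\Re\bigl(\overline{u_t(x)}\,(\Delta+q)u_t(x)\bigr) \le -2\,|u_t(x)|\,(\Delta+q)|u_t|(x),
\]
deduces $\partial_t^- |u_t| \le -(\Delta+q)|u_t|$, and then uses the semigroup comparison $\partial_s^-\bigl(e^{-(t-s)(\Delta+q)}|u_s|\bigr)\le 0$ to conclude $|u_t|\le e^{-t(\Delta+q)}|u_0|$. This is exactly the ``alternative route'' you sketch at the end.

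The two arguments trade off different virtues. Your approach is more elementary in that it bypasses any differentiability issues for $|u_t|$ at zeros and reduces everything to the positivity of $e^{-s\Delta}$ and $e^{-sq}$ plus a standard matrix limit; the only place to be slightly careful is the induction step, where you implicitly use that both factors are positivity-preserving (so that $|Th|\le T|h|$ can be pushed through $S$). The paper's route, on the other hand, isolates the pointwise Kato-type inequality $\Re(\overline{u}\,\Delta u)\ge |u|\,\Delta|u|$ as the core mechanism, which is a reusable statement in its own right and makes transparent that the result is really a subsolution/maximum-principle fact rather than an algebraic one.
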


\begin{proof}
    Let $u_t:=e^{-t(\Delta+q)} f$. We obtain
    \begin{align*}
        \partial_t |u_t|^2 (x) 
        &= -2\Re \left (\overline{u_t(x)} \, (\Delta+q) u_t(x) \right)
        \\&= -2\sum_y p_{xy} 
        \Re\left( \overline{u_t(x)} (u_t(x) - u_t(y)) \right) - 2 q(x) | u_t(x) |^2
        \\& \leq -2|u_t(x)|\sum_y p_{xy} (|u_t(x)| - |u_t(y)|)  - 2 q(x) | u_t(x) |^2
        \\& = -2|u_t(x)|\, (\Delta +q)|u_t|(x),
    \end{align*}
 and hence, $\partial_t^- |u_t| \leq -(\Delta+q) |u_t|$, meaning that $|u_t|$ is a subsolution to the heat equation with added potential $q$. This implies
 \[|u_t| \leq e^{-t(\Delta+q)} |u_0|, \]
 by verifying $\partial_s^- \left( e^{-(t-s)(\Delta+q)} |u_s|\right) \le 0$.
 Replacing $u_t$ by its definition finishes the proof.
\end{proof}



For $r\geq 1$ and let $\Delta_r := (1_B + r1_{B^c} )\Delta$. Moreover, let 
\begin{equation} \label{eq:Rr} 
R_r := (1+\Delta_r)^{-1} = \int_0^\infty e^{-t} e^{-t \Delta_r} dt 
\end{equation}
be the corresponding resolvents.
Let 
\begin{equation} \label{eq:u}
u = (1_B + \Delta)^{-1} 1_{B^c}.
\end{equation}
Let us briefly explain the well-definedness of $u \in C(V,\R)$ under our standing assumption that $V$ is connected to $B$. Let $v \in C(V,\R)$ be in the kernel of $1_B+\Delta$, that is, $(1_B+\Delta)v=0$. Let $x_0 \in V$ be a maximum of $v$. Without loss of generality, we can assume $v(x_0) \ge 0$ (by taking $-v$, if needed). Then we have $\Delta v(x_0)\ge 0$ and $(1_B+\Delta)v(x_0)=0$ implies $\Delta v(x_0) =0$ and either $x_0\in B$ with $v(x_0)=0$ or $x_0 \in B^c$. In case $x_0\in B^c$, notice that $v(y) = v(x_0)$ for all $y \in V$ with $p(x_0,y)>0$. So $v$ is constant for all neighbours of $x_0$.
Repeating this argument, we will eventually
hit a boundary vertex $x \in B$ with $v(x_0)=v(x)$. Since $v$ is maximal at $x$, we have $\Delta v(x) \ge 0$. Together with $(1_B+\Delta)v=0$ and $v(x_0) \ge 0$, this implies that $v(x) = v(x_0) = 0$. A similar argument applies to the minimum. This finishes the proof that $u$ is well-defined. 


Our next aim is to prove $u \ge 0$. Now we choose $x_0 \in V$ where $u$ is minimal. Then we have $\Delta u(x_0) \le 0$. Suppose $u(x_0) < 0$. Note that
$$ 1_B u \ge (1_B+\Delta)u = 1_{B^c} \quad \text{at $x_0$}, $$
which leads to a contradiction in both cases $x_0 \in B$ and $x_0 \in B^c$.

We use $u$ to estimate the resolvents $R_r$.
\begin{lemma} \label{lem:resolventEstimates}
    For $s<r$, and $f\in C(V,\C)$ with $|f|\leq 1_{B^c}$ we have
 \begin{itemize}
     \item[(a)]
     \[
     \|R_s f \|_\infty \leq \frac 1 s \|u\|_{\infty},
     \]
     \item[(b)] 
         \[
\|R_s - R_r \|_{\infty \to \infty} \leq \frac 2 s \|u\|_{\infty},   
    \]
 \end{itemize}
where $u$ is introduced in \eqref{eq:u}.
\end{lemma}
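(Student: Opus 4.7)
My plan for part (a) is to establish the pointwise chain
\[
|R_sf| \;\le\; R_s|f| \;\le\; R_s 1_{B^c} \;=:\; U_s \;\le\; \frac{u}{s},
\]
and then take suprema. The key observation is that $\Delta_s = D_s\Delta$ (with $D_s = 1_B + s1_{B^c}$) is itself a Markov chain Laplacian, with rescaled transition rates $p^{(s)}_{xy}= D_s(x)p_{xy}$. Therefore, the proof of Lemma~\ref{lem:semigroupSubCommute2} applies verbatim with $q = 0$ and $\Delta$ replaced by $\Delta_s$, giving $|e^{-t\Delta_s}f| \le e^{-t\Delta_s}|f|$, which integrates against $e^{-t}\,dt$ to yield the first inequality. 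The second follows from positivity preservation of $e^{-t\Delta_s}$ (standard for Markov-chain semigroups) together with $|f|\le 1_{B^c}$.

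The third (and main) inequality $sU_s \le u$ I would establish as follows. Using $(1 + \Delta_s)U_s = 1_{B^c}$ one computes directly that
\[
(1_B + \Delta)(sU_s) = 1_{B^c}(1 - U_s), \qquad (1_B + \Delta)u = 1_{B^c},
\]
and subtracting gives $(1_B + \Delta)(u - sU_s) = 1_{B^c} U_s$. Since $U_s \ge 0$ and $(1_B+\Delta)^{-1}$ is positivity preserving---provable by exactly the maximum-principle argument the paper already uses to show $u \ge 0$---we obtain $u - sU_s \ge 0$ pointwise, which gives part (a).

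For part (b), I would use the resolvent identity
\[
R_s - R_r = R_s(\Delta_r - \Delta_s)R_r = (r-s)\,R_s\,(1_{B^c}\Delta)\,R_r.
\]
For $f$ with $\|f\|_\infty \le 1$, set $g := R_r f$; since $e^{-t\Delta_r}$ is an $L^\infty$-contraction (preserving constants and positivity) we have $\|g\|_\infty \le 1$. The equation $(1+\Delta_r)g = f$ restricted to $B^c$ reads $r\Delta g = f - g$, so $1_{B^c}\Delta g = \tfrac{1}{r}1_{B^c}(f-g)$, whence
\[
(R_s - R_r)f = \frac{r-s}{r}\, R_s\!\left[1_{B^c}(f-g)\right].
\]
Since $\bigl|\tfrac{1}{2}1_{B^c}(f-g)\bigr| \le 1_{B^c}$, applying part (a) bounds the right-hand side in $\|\cdot\|_\infty$ by $\tfrac{r-s}{r}\cdot\tfrac{2\|u\|_\infty}{s} \le \tfrac{2\|u\|_\infty}{s}$.

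The main obstacle is the third link $sU_s \le u$ in part (a). A naive maximum-principle attack on $w = u - sU_s$ runs into a sign problem on $B^c$: the zeroth-order coefficients in the equations for $u$ and $sU_s$ differ (namely $1$ versus $1/s$), which produces an extra inhomogeneity of the form $-(s-1)U_s/s$ of the wrong sign when $s > 1$. The cleaner approach is to act with the common operator $1_B + \Delta$ (rather than two different natural operators) on both functions, after which the inhomogeneity simplifies to the manifestly nonnegative $1_{B^c} U_s$, and the whole estimate reduces to a single positivity-preservation statement proved by the same maximum-principle argument already present in the paper.
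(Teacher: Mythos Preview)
Your argument is correct and follows the paper's closely; part (b) is essentially identical. For the key step $sU_s \le u$ in (a), the paper is slightly more direct and never encounters the ``sign problem'' you describe: it simply checks $(1+\Delta_s)u \ge s\,1_{B^c}$ pointwise (on $B^c$ this reads $u + s\Delta u \ge s(1_B+\Delta)u = s$, which is just $u \ge 0$; on $B$ both sides vanish) and then applies the positivity-preserving resolvent $R_s = (1+\Delta_s)^{-1}$, so there is no need to pass through the operator $1_B+\Delta$.
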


\begin{proof}
We first prove (a).
We observe that Lemma \ref{lem:semigroupSubCommute2} and \eqref{eq:Rr} imply
    \begin{align} \label{eq:Rsf1Wc}
    |R_s f| \leq (1 +  \Delta_s )^{-1} |f| \leq
    (1 +  \Delta_s )^{-1} 1_{B^c}.       
    \end{align}
    Let $u$ be as introduced in \eqref{eq:u}. Then we have on $B^c$     
    \[
    (1+\Delta_s) u  = u +  s \Delta u  \geq s(1_B +  \Delta) u = s1_{B^c} = s,  
    \]
    where we used $u \ge 0$. Moreover, we have on $B$,
    \[
    (1+\Delta_s) u =  (1_B + \Delta)u = 0,
    \]
    giving
    \[
    (1+\Delta_s) u \geq s1_{B^c}.
    \]
    Together with \eqref{eq:Rsf1Wc}, this implies, pointwise at all vertices
    \[
     u \geq (1+\Delta_s)^{-1} (s1_{B^c}) \geq s|R_s f|. 
    \]
   Rearranging proves (a).
   We now prove (b).
    We have
    \begin{align}\label{eq:resolventEquation}
    R_s - R_r = R_s (\Delta_r - \Delta_s) R_r.        
    \end{align}  
    Since 
    $$ \Delta_r - \Delta_s = (r-s) 1_{B^c} \Delta = \frac{r-s}r 1_{B^c}   \Delta_r, $$ 
    we have
    \[
    (\Delta_r - \Delta_s) R_r = 
    \frac{r-s}r 1_{B^c} (1-R_r).
    \]
    As $\|R_r\|_{\infty \to \infty} \leq 1$ by \eqref{eq:Rr} and the fact that $e^{-t\Delta_r}$ is an $L_\infty$-contraction, we find 
    \[
    \|(\Delta_r - \Delta_s) R_r \|_{\infty \to \infty} \leq 2.
    \]    
    We conclude that if $\|g\|_\infty \leq 1$, then, \[|(\Delta_r - \Delta_s) R_r g | \leq 2 \cdot 1_{B^c},\]
    and thus, by applying (a),
    \[
\|R_s (\Delta_r - \Delta_s) R_r g\|_\infty \leq \frac 2 s \|u\|_\infty,
    \]
finishing the proof using    \eqref{eq:resolventEquation}. 
\end{proof}

\begin{lemma} \label{lem:resolvent-convergence} Let $T: C(B,\C) \to C(B,\C)$ be the Steklov operator introduced in Definition \ref{def:steklov}.
There exists an operator $R_\infty : C(V,\C) \to C(V,\C)$
such that
\begin{itemize}
    \item [(a)] $R_s \to R_\infty$ for $s \to \infty$;
    \item [(b)] $R_\infty f=0$ whenever $f|_B = 0$;
    \item [(c)] For all $f \in C(V,\C)$,
    \[
    \Delta R_\infty f  = 1_B(f - R_\infty f); 
    \]
    \item [(d)] For all $f \in C(V,\C)$,
    \[ (R_\infty f) |_B = (1+ T)^{-1} (f|_B);
    \]
    \item [(e)] For all $f \in C(V,\C)$ we have
    $$ R_\infty = {\rm{Ext}} \circ (1+T)^{-1} \circ \iota_B, $$
    where ${\rm{Ext}}$ is the $\Delta$-harmonic extension operator of functions in $C(B,\C)$ to $C(V,\C)$ and $\iota_B$ is the restriction operator from $C(V,\C)$ to $C(B,\C)$.
\end{itemize}
\end{lemma}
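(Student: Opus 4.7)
The plan is to prove parts (a)--(e) largely in the order stated. Parts (a) and (b) follow directly from Lemma~\ref{lem:resolventEstimates}, part (c) comes from passing to the limit in the resolvent equation, and parts (d)--(e) reduce to showing that $1+T$ is invertible on $C(B,\C)$. This invertibility is what I expect to be the main obstacle, since in the non-reversible case eigenvalues of $T$ may be complex and one cannot simply appeal to non-negativity.

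For (a), Lemma~\ref{lem:resolventEstimates}(b) yields $\|R_s - R_r\|_{\infty\to\infty} \le (2/s)\|u\|_\infty$ for $s<r$, so $(R_s)$ is Cauchy in the operator norm as $s \to \infty$ and converges to some $R_\infty : C(V,\C) \to C(V,\C)$. For (b), if $f|_B = 0$ then $|f|/\|f\|_\infty \le 1_{B^c}$, so Lemma~\ref{lem:resolventEstimates}(a) gives $\|R_s f\|_\infty \le \|f\|_\infty \|u\|_\infty/s \to 0$.

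For (c), I would start from the resolvent equation $(1+\Delta_s)R_s f = f$, which separates as $\Delta R_s f = f - R_s f$ on $B$ and $\Delta R_s f = (f - R_s f)/s$ on $B^c$. Since $R_s f \to R_\infty f$ uniformly by (a) and $\Delta$ is a bounded operator, passing $s \to \infty$ gives $\Delta R_\infty f = f - R_\infty f$ on $B$ and $\Delta R_\infty f = 0$ on $B^c$, which is exactly $\Delta R_\infty f = 1_B(f - R_\infty f)$.

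For (d) and (e), (c) immediately tells me that $R_\infty f$ is $\Delta$-harmonic on $B^c$, so $R_\infty f = \mathrm{Ext}((R_\infty f)|_B)$, and by Definition~\ref{def:steklov} the restriction of (c) to $B$ reads $T((R_\infty f)|_B) + (R_\infty f)|_B = f|_B$, i.e.\ $(1+T)(R_\infty f)|_B = f|_B$. The remaining key step is invertibility of $1+T$. My plan here is as follows: if $(1+T)g = 0$, set $G = \mathrm{Ext}(g) \in C(V,\C)$; then on $B^c$ one has $\Delta G = 0$, while on $B$ one has $\Delta G = Tg = -g = -G$, so $(1_B + \Delta)G \equiv 0$. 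But the invertibility of $1_B + \Delta$ on $C(V,\C)$ was already established by the maximum principle argument justifying the definition of $u$ in \eqref{eq:u} (the real-variable argument given there extends to complex functions by splitting into real and imaginary parts). Hence $G = 0$, so $g = 0$, showing $1+T$ is invertible and giving (d). Finally, (e) follows by composing (d) with the harmonic-extension identity $R_\infty f = \mathrm{Ext}((R_\infty f)|_B)$, yielding $R_\infty = \mathrm{Ext} \circ (1+T)^{-1} \circ \iota_B$.
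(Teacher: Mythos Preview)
Your proof is correct and follows essentially the same route as the paper's: Cauchy property from Lemma~\ref{lem:resolventEstimates}(b) for (a), Lemma~\ref{lem:resolventEstimates}(a) for (b), splitting the resolvent identity over $B$ and $B^c$ and passing to the limit for (c), and reading off (d)--(e) from (c) via the definition of $T$. The one place where you go beyond the paper is in explicitly proving that $1+T$ is invertible; the paper simply applies $(1+T)^{-1}$ without comment, whereas your reduction to the already-established injectivity of $1_B+\Delta$ on $C(V,\C)$ fills this gap cleanly.
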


\begin{proof}
    By Lemma~\ref{lem:resolventEstimates}(b),
    $R_s$ is a Cauchy sequence as $s \to \infty$, and assertion (a) follows easily.
    Assertion (b) follows from Lemma~\ref{lem:resolventEstimates}(a).
    For (c), we observe that on $B$,
    \[
    \Delta R_s f = \Delta_s R_s f
    = f - R_s f,
    \]
    and on $B^c$,
    \[
    \Delta R_s f = \frac 1 s \Delta_s R_s f \to 0 \quad \text{as $s \to \infty$.}
    \]
Taking the limit and combining these two observation proves (c).    
We finally prove (d).
We notice that by $(c)$, the function $R_\infty f$ is the unique harmonic extension of $(R_\infty f)|_B$ and thus by the definition of the Steklov operator $T$, we obtain on $B$,
\[
T ((R_\infty f)|_B) = \Delta R_\infty f.
\]
Hence, on $B$ (applying (c) in the last equality),
\[
(1+T) ((R_\infty f)|_B) = (1+\Delta) R_\infty f = f|_B,
\]
and (d) follows by applying $(1+T)^{-1}$.
Note that (e) is a direct consequence of (c) and (d).

Thus, the proof of the lemma is finished.
\end{proof}

\begin{theorem}\label{thm:convFinite}
Assume that $V$ is connected to $B$. 
We have the following eigenvalue convergence of the operators $\Delta_r = D_r \Delta: C(V,\C) \to C(V,\C)$ to the Steklov operator $T: C(B,\C) \to C(B,\C)$: The spectral measure 
$$ \mu_r := \sum_{i=1}^{n} \delta_{\lambda_i^r} $$
of $\Delta_r$ with eigenvalues $\lambda_i^r$ converges to the spectal measure 
$$ \mu_\infty := \sum_{i=1}^{b} \delta_{\sigma_i} $$
of $T$ with eigenvalues $\sigma_i$,
as $r \to \infty$, in the measure convergence, that is, we have for all $f \in C_c(\C)$,
$$ \mu_r(f) \to \mu_\infty(f). $$
\end{theorem}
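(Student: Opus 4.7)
The key reduction is to transfer the eigenvalue convergence of $\Delta_r$ to the resolvent convergence $R_r \to R_\infty$ already established in Lemma \ref{lem:resolvent-convergence}. Since $R_r = (1+\Delta_r)^{-1}$ has eigenvalues $(1+\lambda_i^r)^{-1}$, and Lemma \ref{lem:resolvent-convergence}(a) together with finite-dimensionality of $C(V,\C)$ yields $R_r \to R_\infty$ in operator norm, the characteristic polynomials $\det(zI-R_r)$ converge coefficient-wise to $\det(zI-R_\infty)$. Continuity of roots of complex polynomials then implies that the eigenvalues of $R_r$, counted with algebraic multiplicity, converge as multisets to those of $R_\infty$. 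The first task is thus to identify $\mathrm{spec}(R_\infty)$.

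For this, I would exhibit an $R_\infty$-invariant decomposition via parts (b) and (e) of Lemma \ref{lem:resolvent-convergence}. Set $W = \ker \iota_B = \{g \in C(V,\C) : g|_B = 0\}$, a subspace of dimension $n-b$ on which $R_\infty$ vanishes by (b). Its complement $\mathrm{Range}(\mathrm{Ext})$ has dimension $b$, intersects $W$ only at $0$ (by uniqueness of the harmonic extension), and is $R_\infty$-invariant by (e) since $R_\infty$ factors through $\mathrm{Ext}$. Moreover, $\iota_B$ restricts to an isomorphism from $\mathrm{Range}(\mathrm{Ext})$ onto $C(B,\C)$ with inverse $\mathrm{Ext}$, and this conjugation identifies $R_\infty|_{\mathrm{Range}(\mathrm{Ext})}$ with $(1+T)^{-1}$ acting on $C(B,\C)$. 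Consequently, the characteristic polynomial of $R_\infty$ factors as $z^{n-b}\prod_{j=1}^{b}(z-(1+\sigma_j)^{-1})$, so $\mathrm{spec}(R_\infty)$ consists of $0$ with algebraic multiplicity $n-b$ and the values $(1+\sigma_j)^{-1}$, $j\in[b]$, with the algebraic multiplicities of the corresponding $\sigma_j$.

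Combining the above through the M\"obius correspondence $w \mapsto (1-w)/w$ between the spectra of $R_r$ and $\Delta_r$, the multiset $\{(1+\lambda_i^r)^{-1}\}_{i=1}^n$ converges to the multiset $\{0\}^{n-b} \cup \{(1+\sigma_j)^{-1}\}_{j=1}^b$. Since $(1+\sigma_j)^{-1} \ne 0$ for every $j$, there exists a relabeling $\pi$ of $[n]$ such that $\lambda_{\pi(j)}^r \to \sigma_j$ for $j \in [b]$, while $(1+\lambda_{\pi(j)}^r)^{-1} \to 0$ for $j > b$; the latter forces $|\lambda_{\pi(j)}^r| \to \infty$, consistent with Theorem \ref{thm:convInfinity}. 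For any $f \in C_c(\C)$ with $\mathrm{supp}(f) \subset \{z \in \C : |z| \le M\}$, Theorem \ref{thm:convInfinity} guarantees that for all sufficiently large $r$ the $n-b$ escaping eigenvalues satisfy $|\lambda_{\pi(j)}^r| > M$, so they do not contribute to $\mu_r(f) = \sum_i f(\lambda_i^r)$; the remaining $b$ terms satisfy $\sum_{j=1}^b f(\lambda_{\pi(j)}^r) \to \sum_{j=1}^b f(\sigma_j) = \mu_\infty(f)$ by the paragraph above.

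I expect the main obstacle to be the second paragraph: one must verify carefully that $C(V,\C) = W \oplus \mathrm{Range}(\mathrm{Ext})$ is genuinely an $R_\infty$-invariant direct sum decomposition and that the induced action on $\mathrm{Range}(\mathrm{Ext})$ is truly conjugate to $(1+T)^{-1}$, so that algebraic multiplicities match up correctly. Once this spectral identification of $R_\infty$ is in place, the remainder reduces to continuity of roots of polynomials under coefficient perturbation and a routine compact-support truncation using the escape result from Theorem \ref{thm:convInfinity}.
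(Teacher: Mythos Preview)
Your proposal is correct and follows essentially the same approach as the paper: both pass to the resolvents $R_r \to R_\infty$ via Lemma~\ref{lem:resolvent-convergence}, identify the spectrum of $R_\infty$ as $\{0\}^{n-b} \cup \{(1+\sigma_j)^{-1}\}_{j=1}^b$, and transfer back by the map $w \mapsto w^{-1}-1$. The only cosmetic differences are that the paper reads off the spectrum of $R_\infty$ from its block lower-triangular form in the coordinate splitting $C(B,\C)\oplus C(B^c,\C)$ (citing Kato for eigenvalue continuity), whereas you use the $R_\infty$-invariant decomposition $W\oplus\mathrm{Range}(\mathrm{Ext})$ and continuity of polynomial roots; note also that your appeal to Theorem~\ref{thm:convInfinity} in the last paragraph is redundant, since $(1+\lambda_{\pi(j)}^r)^{-1}\to 0$ already forces $|\lambda_{\pi(j)}^r|\to\infty$.
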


\begin{proof}
  By Lemma \ref{lem:resolvent-convergence}(a), we have
  $$ R_s \to R_\infty $$
  in the operator sense, and $R_\infty$ has the following block matrix structure:
  $$ R_\infty = \begin{pmatrix} (1 + T)^{-1} & 0 \\ \iota_{B^c}\circ {\rm{Ext}}\circ (1+T)^{-1} & 0 \end{pmatrix}, $$
  by Lemma \ref{lem:resolvent-convergence}(e). Therefore, using again \cite[Theorems II.5.1 and II.5.2]{kato95}, we have the measure convergence
  $$ \sum_{i=1}^{n} \delta_{(1+\lambda_i^r)^{-1}} \to (n-b) \cdot \delta_0 + \sum_{i=1}^{b} \delta_{(1+\sigma_i)^{-1}}, $$
  which implies the statement of the theorem.
\end{proof}

\bigskip


\section*{Acknowledgements}
We express our sincere gratitude to Prof.\ Matthias Keller for valuable and insightful discussions that contributed significantly to the proof of one of the main results, Theorem~\ref{t:gcheeger}.

B. Hua is supported by NSFC, no.12371056. S. Kamtue is supported by grants for development of new faculty staff, Ratchadaphiseksomphot Fund, Chulalongkorn University. S. Liu is supported by NSFC, no. 12431004.

\printbibliography

\end{document}